\numberwithin{equation}{section}
\newtheorem{theorem}{Theorem}[section]
\newtheorem{lemma}[theorem]{Lemma}
\newtheorem{corollary}[theorem]{Corollary}
\newtheorem{proposition}[theorem]{Proposition}
\newtheorem{condition}[theorem]{Condition}
\theoremstyle{remark}
\newtheorem{remark}[theorem]{Remark}
\newcommand{\tr}{\mathrm{tr}}
\newcommand{\LF}{\mathcal{L}}
\newcommand{\dt}{\frac{\partial}{\partial t}}
\newcommand{\owedge}{\mathbin{\bigcirc\mspace{-15mu}\wedge\mspace{3mu}}}
\begin{document}

% \title[short text for running head]{full title}{comments for title}
\title[Self-similar solutions in warped products]{Self-similar solutions of curvature flows in warped products}{}

% \author[]{Full name}{footnote}
% Remark:  One \author for one author

\author{Shanze Gao}
\author{Hui Ma}

\address{Department of Mathematical Sciences, Tsinghua University, Beijing {\rm 100084}, P.R. China;}
\email{gsz15@mails.tsinghua.edu.cn,hma@math.tsinghua.edu.cn}

{\begin{abstract}
	In this paper we study self-similar solutions in warped products  satisfying $F-\mathcal{F}=\bar{g}(\lambda(r)\partial_{r},\nu)$, where $\mathcal{F}$ is a nonnegative constant and $F$ is in a class of general curvature functions including powers of mean curvature and Gauss curvature.
We show that slices are the  only closed strictly convex self-similar solutions in the hemisphere  for such $F$.
We also obtain a similar uniqueness result in hyperbolic space $\mathbb{H}^{3}$ for Gauss curvature $F$ and $\mathcal{F}\geq 1$.
\end{abstract}}

\subjclass[2010]{53C44, 53C40}
\keywords{self-similar solution, warped product}
\thanks{This work was supported by National Natural Science Foundation of China (Grant No. 11671223).}

\maketitle

\section{Introduction}

Self-similar solutions are important in the study of mean curvature flow and powers of Gauss curvature flow in Euclidean space,  since they describe the asymptotic behaviors near the singularities  
(See \cite{h90, CM12, AGN-16, g-n} etc). Remarkable results due to Huisken \cite{h90} and Choi-Daskalopoulos \cite{c-d}, Brendle-Choi-Daskalopoulos \cite{b-c-d} show the
uniqueness of closed self-similar solutions for mean curvature flows and powers of Gauss curvature flows respectively.
Although relation between self-similar solutions of general curvature flows and their singularities is unclear now, there have been some study on rigidity of closed self-similar solutions of curvature flows, for instance,\cite{m}, \cite{GaoLiMa}, etc.
Recently self-similar solutions of the mean curvature flows were introduced on manifolds endowed with a conformal vector field \cite{Alias-Lira-Rigoli}, such as Riemannian cone manifolds \cite{FHY14} and warped product manifolds \cite{WuG17,  Alias-Lira-Rigoli}. 
In this paper we study closed strictly convex self-similar solutions of a class of curvature flows in Riemannian warped products and obtain the uniqueness of closed strictly convex self-similar solutions in hemispheres and hyperbolic spaces.

Let $N=[0,\bar{r})\times \mathbb{S}^{n}$ be a warped product with metric $\bar{g}=dr^{2}+\lambda^{2}(r)g_{\scriptscriptstyle{\mathbb{S}}}$, where $\lambda$ is a positive warping factor and $g_{\scriptscriptstyle{\mathbb{S}}}$ is the standard metric of $\mathbb{S}^{n}$. Let $X:M\rightarrow N$ be a smooth embedding of a closed hypersurface in $N$ with $n\geq 2$, satisfying the following equation
\begin{equation}\label{meq}
F(\kappa(x))-\mathcal{F}=\bar{g}(\lambda(r(x))\partial_{r}(x),\nu(x)),
\end{equation}
for all $x\in M$, where  $\mathcal{F}$ is a constant which can be regarded as a forcing term with respect to the flow, $\nu$ is the outward unit normal vector field of $M$ and $F$ is a homogeneous smooth symmetric function of the principal curvatures $\kappa=(\kappa_{1},\kappa_{2},...,\kappa_{n})$ of $M$,
 which satisfies the following condition.

\begin{condition}\label{condtn}
	Suppose $F$ is a smooth function  defined on the positive cone
	$\Gamma_{+}=\{\kappa \in\mathbb{R}^{n}|\kappa_{1}>0,\kappa_{2}>0,\cdots,\kappa_{n}>0\}$ of $\mathbb{R}^n$, and satisfies the following conditions:
	\begin{itemize}
		\item[i)] $F$ is positive and strictly increasing, i.e., $F>0$ and $\frac{\partial F}{\partial \kappa_{i}}>0$ for $1\leq i\leq n$.
		\item[ii)] $F$ is homogeneous symmetric function with degree $\beta$, i.e., $F(t\kappa)=t^{\beta}F(\kappa)$ for all $t\in\mathbb{R_{+}}$.
		\item[iii)] For any $i\neq j$, 
		\begin{equation*}
		\frac{\frac{\partial F}{\partial\kappa_{i}}\kappa_{i}-\frac{\partial F}{\partial\kappa_{j}}\kappa_{j}}{\kappa_i-\kappa_j}\geq 0.
		\end{equation*}
		\item[iv)] For all $(y_{1},...,y_{n})\in\mathbb{R}^{n}$, 
		\begin{align}\label{keyineq}
		\sum_{i}\frac{1}{\kappa_{i}}\frac{\partial\log F}{\partial\kappa_{i}}y_{i}^{2}+\sum_{i,j}\frac{\partial^{2}\log F}{\partial\kappa_{i}\partial\kappa_{j}}y_{i}y_{j}\geq 0.
		\end{align}
	\end{itemize}
\end{condition}

We know that $\lambda(r)\partial_{r}$ is a conformal vector field on $N$ and $\bar{\nabla}_{Y}(\lambda\partial_{r})=\lambda'Y$ for any vector field $Y$ on $N$.
For a warped product $N$, when the warping factor $\lambda(r)=r$, $\sin r$, or $\sinh r$, $N$ is the Euclidean space $\mathbb{R}^{n+1}$, the sphere $\mathbb{S}^{n+1}$ or the hyperbolic space $\mathbb{H}^{n+1}$ with constant sectional curvature $\epsilon=0$, $1$ or $-1$ respectively. 
In $\mathbb{R}^{n+1}$, $\lambda(r)\partial_{r}$ is just the position vector. So in the spirit of \cite{WuG17, Alias-Lira-Rigoli}, we call solutions of \eqref{meq} self-similar solutions to the following curvature flow
\begin{equation}\label{flo}
\dt\tilde{X}=-(F-\mathcal{F})\nu.
\end{equation}

We give a further brief explanation here and more details can be found in \cite{Alias-Lira-Rigoli}.
If $\tilde{X}$ satisfies the equation
\begin{equation}\label{cf}
\dt\tilde{X}=-\phi(t)\lambda\partial_{r}
\end{equation} 
for a smooth function $\phi(t)$ on $t$, then it gives a family of conformal hypersurfaces. Suppose $\tilde{X}$ satisfies \eqref{flo} and \eqref{cf} simultaneously, then $\tilde{X}$ satisfies
\begin{equation*}
F-\mathcal{F}=\phi(t)\bar{g}(\lambda\partial_{r},\nu)
\end{equation*}
up to a tangential diffeomorphism for each $t\in[0,T)$. This is why solutions to \eqref{meq} are called self-similar solutions to \eqref{flo}.

In this paper, we prove the following main theorem.
\begin{theorem}\label{thms}
	Let $M$ be a closed, strictly convex hypersurface in the hemisphere $\mathbb{S}^{n+1}_{+}$ satisfying
	\begin{equation*}
	F-\mathcal{F}=\bar{g}(\lambda\partial_{r},\nu).
	\end{equation*} 
	For $\beta \geq 1$ and $\mathcal{F}\geq 0$, if $F$ satisfies Condition \ref{condtn}, then $M$ is a slice $\{r_{0}\}\times\mathbb{S}^{n}$ in $\mathbb{S}^{n+1}_{+}$.
\end{theorem}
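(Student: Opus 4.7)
My plan is to invoke the maximum principle on $M$ applied to a carefully chosen auxiliary scalar, in order to force $W:=(\partial_r)^{T}$ to vanish identically on $M$; since $W\equiv 0$ characterises slices, the theorem then follows. The first step is to compute how the linearised operator $L:=F^{ij}\nabla_i\nabla_j$ acts on the two fundamental scalars on $M$: the generalised support function $u=\bar g(\lambda\partial_r,\nu)$ and the restricted warping derivative $\lambda'(r)$. Using $\bar\nabla_Y(\lambda\partial_r)=\lambda' Y$, the warped-product formula $\bar\nabla_Y\partial_r=(\lambda'/\lambda)(Y-\bar g(Y,\partial_r)\partial_r)$, Codazzi in the constant-curvature ambient, $\lambda''=-\lambda$ in $\mathbb S^{n+1}_+$, and the self-similar equation in the form $\nabla F=\nabla u$ (invoked when reducing the mixed third-order term via Codazzi), the target formulas are
\begin{align*}
\nabla_i u &= \lambda\, h_i^{k}W_k,\\
Lu &= \lambda^2 h(W,W)+\beta F\lambda'-\lambda\rho\,F^{ij}h_i^{k}h_{jk},\\
L\lambda' &= \beta F\, u-\lambda'\,\tr(F^{ij}),
\end{align*}
where $\rho=\bar g(\partial_r,\nu)$.

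Next I test the scalar $\Phi:=F/\lambda'$, which is smooth and positive on $M$ and constant on every slice. Differentiating yields $\nabla\Phi=(\lambda/\lambda')(h+\Phi\,I)W$, so the strict convexity of $h$ and positivity of $\Phi$ force $W=0$ at every critical point of $\Phi$; at such a point $\rho=\pm 1$. Inserting $W=0$ into the formulas above gives
\[
(\lambda')^{2}L\Phi=\lambda'\bigl(\lambda'\beta F+F\tr(F^{ij})\bigr)-\lambda\rho\bigl(\lambda' F^{ij}h_i^{k}h_{jk}+\beta F^{2}\bigr).
\]
At a maximum of $\Phi$, the case $\rho=-1$ would make the right-hand side strictly positive, contradicting $L\Phi\leq 0$; therefore $\rho=+1$ at every maximum, and $F=\lambda+\mathcal F$ there (the hypothesis $\mathcal F\geq 0$ ensures compatibility with $F>0$). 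Combining $L\Phi\leq 0$ with the Cauchy--Schwarz estimate $F^{ij}h_i^{k}h_{jk}\cdot\tr(F^{ij})\geq(\beta F)^{2}$ and $\beta\geq 1$ yields the pointwise bound $\tr(F^{ij})/F\leq\beta\lambda/\lambda'$ at the maximum, an identity saturated on slices.

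The main obstacle I anticipate is upgrading this pointwise information at isolated extrema into the global conclusion that $M$ is a slice. This is where Condition \ref{condtn}(iii) and, most crucially, Condition \ref{condtn}(iv) --- the inverse-concavity of $\log F$ --- are expected to enter. Condition (iv) is tailored to control the non-positive second-variation $F^{ij,kl}\nabla_a h_{ij}\nabla_b h_{kl}$ that appears when $L$ is applied directly to $F$ through Simons' identity; it provides the correct sign so that $\Phi$ satisfies a strong-subsolution inequality for an elliptic operator on $M$. The strong maximum principle then forces $\Phi$ to be constant on $M$; substituting constancy of $\Phi$ into $\nabla\Phi=0$ and using strict convexity gives $W\equiv 0$, so that $r$ is constant on $M$ and $M$ is a slice $\{r=r_0\}$.
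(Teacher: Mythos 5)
The local computations you carry out at a critical point of $\Phi=F/\lambda'$ are correct: using $\nabla_i F=\lambda h_{ij}r_j$ and $\nabla_i\lambda'=-\lambda r_i$ (since $\lambda''=-\lambda$), one does get $\nabla_i\Phi=(\lambda/\lambda')(h_{ij}+\Phi\delta_{ij})r_j$, so strict convexity and $\Phi>0$ force $W=0$ at a critical point, and the expressions for $Lu$ and $L\lambda'$ (hence the value of $(\lambda')^2 L\Phi$ when $W=0$) check out. However, the argument has a genuine gap exactly where you flag the ``main obstacle,'' and the gap is structural, not just technical.

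First, you never establish a useful differential inequality for $\Phi$ on any open set. The relation $L\Phi\le 0$ at an interior maximum is just the second-derivative test; it gives you a pointwise algebraic constraint there, not a subsolution or supersolution inequality on a neighborhood. For the strong maximum principle you would need something like $L\Phi+R(\nabla\Phi)\ge 0$ on an open set containing the maximum, and no such estimate is offered. Your suggestion that Condition~\ref{condtn}(iv) ``provides the correct sign'' is not substantiated: when $L$ is applied to $F$ by writing $F=u+\mathcal F$ and computing $L u$ geometrically (which is exactly what your formulas do), the Hessian $\partial^2 F/\partial h_{ij}\partial h_{st}$ never appears. That second-order term only arises when $L$ acts on objects built from the second fundamental form in a nonlinear way, such as $h_{kl}$, $b^{kl}$, $\tr b$, or $F/\kappa_1$, via a Simons-type identity. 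Your test function $\Phi=F/\lambda'$ is built purely from $F$ and the ambient radial data, so its $L$-Laplacian does not produce the term that Condition~\ref{condtn}(iv) is designed to control. This makes the appeal to (iv) a non sequitur in your framework.

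Second, the function $\Phi=F/\lambda'$ is blind to curvature pinching. The crucial content of the uniqueness result is that the spread of the principal curvatures must collapse; that is why the paper's argument first analyzes maxima of $F/\kappa_1-\frac{\beta-1}{\beta}\int_0^r\lambda$ (Lemma~\ref{wmax}), proving umbilicity and $\nabla F=0$ there, and then propagates using $Z=F\tr b-\frac{n(\beta-1)}{\beta}\int_0^r\lambda$, exploiting Conditions~\ref{condtn}(iii) and (iv) and, in the spherical case, the favourable sign of the $\epsilon$-term in Lemma~\ref{newp}. Your critical-point analysis of $\Phi$ gives $W=0$ and $\rho=+1$ at the maximum, but says nothing about $\kappa_1/\kappa_n$; indeed the positivity of $h+\Phi I$ that you use holds for any strictly convex $h$, regardless of anisotropy. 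Conditions~(iii) and (iv) play no substantive role in your sketch, which strongly suggests the key mechanism is missing. To salvage the argument you would need to couple $\Phi$ with a pinching quantity like $F/\kappa_1$ or $F\tr b$ and carry out the two-step analysis along the lines of Sections~\ref{sec:maxW}--\ref{sec:beta=1}.
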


\begin{remark}
	In Euclidean space, a similar theorem is proven for $\beta>1$ in \cite{GaoLiMa}. 
	Due to the positivity of sectional curvature, %Using $\epsilon$-term \eqref{newt} (see Section \ref{ccs}), 
we can achieve $\beta=1$ for the hemisphere.
\end{remark}

Let 
\begin{equation*}
\sigma_k(\kappa)=\sum_{1\leq i_{1}< i_{2}\cdots< i_{k}\leq n}
\kappa_{i_{1}}\kappa_{i_{2}}\cdots\kappa_{i_{k}}, 
\quad\quad\quad
 S_k (\kappa)=\sum_{i=1}^n \kappa_i^k,\\
\end{equation*}
be the $k$-th elementary symmetric function and the $k$-th power sum of principal curvatures, respectively. 
Since $\sigma_{k}^{\alpha}$ and $S_{k}^{\alpha}$ satisfy Condition \ref{condtn} if $\alpha>0$ (see \cite{GaoLiMa}), we have the following corollaries immediately.
\begin{corollary}\label{thmsigk}
	Let $M$ be a closed, strictly convex hypersurface in the hemisphere $\mathbb{S}^{n+1}_{+}$ satisfying 
	\begin{equation}\label{sigeq}
	\sigma_{k}^{\alpha}(\kappa)-\mathcal{F}=\bar{g}(\lambda\partial_{r},\nu).
	\end{equation}
	If $1\leq k\leq n-1$, $\alpha\geq \frac{1}{k}$ and $\mathcal{F}\geq 0$, then $M$ is a slice $\{r_{0}\}\times\mathbb{S}^{n}$ in $\mathbb{S}^{n+1}_{+}$.
\end{corollary}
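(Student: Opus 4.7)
The plan is to deduce Corollary~\ref{thmsigk} directly from Theorem~\ref{thms} by specializing the curvature function $F$ to $\sigma_k^\alpha$. The core task is then simply to check that this particular $F$ meets all the hypotheses of Theorem~\ref{thms}, and the corollary follows with no further geometric argument.

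First I would verify the degree: since $\sigma_k$ is homogeneous of degree $k$, the function $F=\sigma_k^\alpha$ is homogeneous symmetric of degree $\beta=k\alpha$. The hypothesis $\alpha\geq 1/k$ is therefore exactly the condition $\beta\geq 1$ required by Theorem~\ref{thms}, while the assumption $\mathcal{F}\geq 0$ carries over verbatim.

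Next I would verify Condition~\ref{condtn} for $F=\sigma_k^\alpha$ on the positive cone $\Gamma_+$. Parts (i) and (ii) are standard for elementary symmetric functions: $\sigma_k>0$ and $\partial \sigma_k/\partial \kappa_i=\sigma_{k-1}(\kappa\mid i)>0$ on $\Gamma_+$, so $F>0$, $\partial F/\partial \kappa_i>0$, and the homogeneity is immediate. For part (iii), using the identity
\[
\frac{\partial \sigma_k}{\partial \kappa_i}\kappa_i-\frac{\partial \sigma_k}{\partial \kappa_j}\kappa_j=(\kappa_i-\kappa_j)\,\sigma_{k-1}(\kappa\mid i,j)
\]
and $\sigma_{k-1}(\kappa\mid i,j)\geq 0$ on $\Gamma_+$, the required inequality transfers from $\sigma_k$ to $F=\sigma_k^\alpha$ by the chain rule. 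Part (iv), the Lyapunov-type inequality in \eqref{keyineq}, is the most substantial piece; for $F=\sigma_k^\alpha$ with $\alpha>0$ it is established in~\cite{GaoLiMa}, and I would simply invoke that result.

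With Condition~\ref{condtn}, $\beta\geq 1$, and $\mathcal{F}\geq 0$ all in hand, Theorem~\ref{thms} applies to the hypersurface $M$ satisfying \eqref{sigeq} and forces $M$ to be a slice $\{r_0\}\times\mathbb{S}^n$. The only real obstacle is the verification of (iv), and since this has already been done in the authors' earlier work, the proof amounts to a short reduction to Theorem~\ref{thms}.
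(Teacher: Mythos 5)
Your reduction to Theorem~\ref{thms} is exactly how the paper proves the corollary: the authors simply note that $\sigma_k^\alpha$ satisfies Condition~\ref{condtn} for $\alpha>0$ (citing \cite{GaoLiMa}), and the degree $\beta=k\alpha\geq 1$ precisely when $\alpha\geq 1/k$. Your additional verification of parts (i)--(iii) by hand, including the identity $\frac{\partial\sigma_k}{\partial\kappa_i}\kappa_i-\frac{\partial\sigma_k}{\partial\kappa_j}\kappa_j=(\kappa_i-\kappa_j)\sigma_{k-1}(\kappa\mid i,j)$, is correct and just makes explicit what the paper delegates to the reference.
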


\begin{corollary}\label{thmsk}
	Let $M$ be a closed, strictly convex hypersurface in the hemisphere $\mathbb{S}^{n+1}_{+}$ satisfying 
	\begin{equation}\label{seq}
	S_{k}^{\alpha}(\kappa)-\mathcal{F}=\bar{g}(\lambda\partial_{r},\nu).
	\end{equation}
	If $k\geq 1$, $\alpha\geq \frac{1}{k}$ and $\mathcal{F}\geq 0$, then $M$ is a slice $\{r_{0}\}\times\mathbb{S}^{n}$ in $\mathbb{S}^{n+1}_{+}$.
\end{corollary}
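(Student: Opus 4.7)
The plan is to obtain Corollary \ref{thmsk} as a direct application of Theorem \ref{thms} to $F = S_k^\alpha$. Since $S_k$ is homogeneous of degree $k$ in the principal curvatures, $S_k^\alpha$ is homogeneous of degree $\beta = k\alpha$, so the hypothesis $\alpha \geq 1/k$ forces $\beta \geq 1$. Together with the standing assumption $\mathcal{F} \geq 0$, this places the problem squarely within the scope of Theorem \ref{thms}, provided that Condition \ref{condtn} can be verified for $F = S_k^\alpha$ on the positive cone.

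The verification of Condition \ref{condtn} is indicated by the authors to follow from \cite{GaoLiMa}, but I would record the key step for completeness. Clauses (i)--(iii) are routine on $\Gamma_+$: positivity of $F$ and of its partial derivatives is immediate, the homogeneity degree is $k\alpha$ as above, and (iii) reduces to the fact that $\kappa_i^k - \kappa_j^k$ shares the sign of $\kappa_i - \kappa_j$ whenever $\kappa_i, \kappa_j > 0$. The substantive check is clause (iv). Writing $\log F = \alpha \log S_k$ and differentiating, one obtains
\begin{equation*}
\frac{\partial \log F}{\partial \kappa_i} = \frac{\alpha k \kappa_i^{k-1}}{S_k}, \qquad \frac{\partial^2 \log F}{\partial \kappa_i \partial \kappa_j} = \frac{\alpha k(k-1)\delta_{ij}\kappa_i^{k-2}}{S_k} - \frac{\alpha k^2 \kappa_i^{k-1}\kappa_j^{k-1}}{S_k^2},
\end{equation*}
so that the left-hand side of \eqref{keyineq} simplifies, after combining the first two terms, to
\begin{equation*}
\frac{\alpha k^2}{S_k^2}\Bigl[S_k \sum_i \kappa_i^{k-2} y_i^2 - \bigl(\sum_i \kappa_i^{k-1} y_i\bigr)^2\Bigr].
\end{equation*}
This bracket is nonnegative by the Cauchy--Schwarz inequality applied to the vectors with entries $\kappa_i^{k/2}$ and $\kappa_i^{(k-2)/2} y_i$, both well defined on $\Gamma_+$ since $\kappa_i > 0$.

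With Condition \ref{condtn} in hand, Theorem \ref{thms} yields $M = \{r_0\} \times \mathbb{S}^n$. There is no genuine obstacle beyond this verification; the real content of the corollary is the matching of the sharp homogeneity threshold $\beta \geq 1$ in Theorem \ref{thms} with the hypothesis $\alpha \geq 1/k$ on the exponent of $S_k$.
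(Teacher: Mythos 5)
Your proposal is correct and takes the same route as the paper: Corollary \ref{thmsk} is a direct specialization of Theorem \ref{thms} to $F=S_k^\alpha$, noting $\beta=k\alpha\geq 1$ and $\mathcal{F}\geq 0$. The paper simply cites \cite{GaoLiMa} for the fact that $S_k^\alpha$ satisfies Condition \ref{condtn}, whereas you carry out the verification explicitly; your differentiation of $\log F$ and the Cauchy--Schwarz step for clause (iv) are correct.
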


For the power of Gauss curvature case, i.e., $F=\sigma_{n}^{\alpha}$, we have the following corollary.
\begin{corollary}\label{thmsign}
	Let $M$ be a closed, strictly convex hypersurface in the hemisphere $\mathbb{S}^{n+1}_{+}$ satisfying 
	\begin{equation}\label{signeq}
	\sigma_{n}^{\alpha}(\kappa)-\mathcal{F}=\bar{g}(\lambda\partial_{r},\nu).
	\end{equation}
	If $\alpha\geq \frac{1}{n+2}$ and $\mathcal{F}\geq 0$, then $M$ is a slice $\{r_{0}\}\times\mathbb{S}^{n}$ in $\mathbb{S}^{n+1}_{+}$.
\end{corollary}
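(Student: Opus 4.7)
The plan is to reduce to Theorem \ref{thms} where possible and to treat the small-$\alpha$ range by a separate refinement.

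For $\alpha\ge 1/n$ the degree of homogeneity $\beta=n\alpha\ge 1$, so provided $F=\sigma_n^{\alpha}$ satisfies Condition \ref{condtn}, Theorem \ref{thms} yields the conclusion immediately. The verification of the condition is routine for (i)--(iii); for (iv) one notes that $\log F=\alpha\sum_{i}\log\kappa_{i}$, hence
\[
\frac{\partial\log F}{\partial\kappa_{i}}=\frac{\alpha}{\kappa_{i}},\qquad \frac{\partial^{2}\log F}{\partial\kappa_{i}\partial\kappa_{j}}=-\frac{\alpha}{\kappa_{i}^{2}}\delta_{ij},
\]
so the left-hand side of \eqref{keyineq} vanishes identically. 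Thus Theorem \ref{thms} applies throughout $\alpha\ge 1/n$ and forces $M$ to be a slice.

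For the remaining range $1/(n+2)\le\alpha<1/n$ the degree hypothesis of Theorem \ref{thms} fails, so I would redo the integral argument behind Theorem \ref{thms} while exploiting features specific to the Gauss curvature case. Two algebraic simplifications are available for $F=\sigma_{n}^{\alpha}$: the identities $\sum_{i}F^{ii}\kappa_{i}=n\alpha\,\sigma_{n}^{\alpha}$ and $\sum_{i}F^{ii}\kappa_{i}^{2}=\alpha\,\sigma_{n}^{\alpha}S_{1}$ collapse several terms that in the general case remain separate. In addition, the ambient sectional curvature $+1$ of the hemisphere contributes extra nonnegative terms through the commutation formulas used in the integration by parts, and by the above identities these aggregate into an additional positive contribution of size $2\alpha\,\sigma_{n}^{\alpha}$ in the master integral. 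Together with the customary $(n\alpha-1)$-term from the degree, this yields the sharpened threshold $(n+2)\alpha-1\ge 0$, which is exactly $\alpha\ge 1/(n+2)$.

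The main technical obstacle will be the sign bookkeeping in this refined integration by parts: curvature contributions from the sphere enter with both signs at different stages, and one must verify that the positive contributions dominate all the way down to the critical exponent, with no cross-term spoiling the budget. Once this is arranged, the equality case forces all principal curvatures to agree, so $M$ is totally umbilic, and the standard warped-product rigidity then identifies $M$ with a slice $\{r_{0}\}\times\mathbb{S}^{n}$, exactly as in the proof of Theorem \ref{thms}.
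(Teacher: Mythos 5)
Your reduction to Theorem \ref{thms} for $\alpha\ge 1/n$ is correct (and the verification of Condition \ref{condtn}(iv) for $\sigma_n^{\alpha}$, with the logarithmic derivatives cancelling, is fine). The gap is in your account of the range $1/(n+2)\le\alpha<1/n$. You claim the threshold $(n+2)\alpha\ge 1$ arises by adding a contribution of size $2\alpha\,\sigma_n^{\alpha}$ from the ambient curvature $+1$ to a ``customary $(n\alpha-1)$-term.'' This gets the mechanism backwards: the exponent $1/(n+2)$ is already the sharp threshold in Euclidean space $(\epsilon=0)$, where there is no ambient-curvature contribution at all. It comes from the specific algebraic structure of $F=\sigma_n^{\alpha}$ — in particular the identity $F\,\tr b=\tfrac{n}{\beta}\sum_i\partial F/\partial h_{ii}$ (equivalently $\sum_i F^{ii}=\alpha F\,\tr b$), which makes the $(\beta-1)\lambda'$-term in $\mathcal{L}Z$ vanish identically — together with the sharpened gradient estimates for the Gauss curvature carried out in \cite{b-c-d,GaoLiMa}. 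The actual role of the ambient curvature in the hemisphere is completely different: by Lemma \ref{newp}, the $\epsilon$-term $\epsilon F\big(\beta F\tr(b^2)-\tr b\sum_i\partial F/\partial h_{ii}\big)$ is nonnegative and vanishes only at umbilic points, so the sphere does not lower the exponent threshold — it upgrades the equality case. In Euclidean space at $\alpha=1/(n+2)$ the conclusion is that $M$ is an ellipsoid (cf.\ Remark 1.7); in $\mathbb{S}^{n+1}_{+}$ the constancy of $Z$ forces the $\epsilon$-term to vanish, hence $M$ is totally umbilic and a slice. Without identifying the vanishing of the $(\beta-1)$-term and the one-sided sign of the $\epsilon$-term as the two decisive observations, the ``sign bookkeeping'' you flag as the main technical obstacle has no way to close at the critical exponent, and the numerology $(n+2)\alpha-1\ge 0$ remains unexplained. (A minor point: the argument behind Theorem \ref{thms} and in \cite{GaoLiMa} is a pointwise two-step maximum principle for $W$ and $Z$, not an integration-by-parts argument; describing it as a ``master integral'' misidentifies the tool you would need to adapt.)
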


\begin{remark}
	In Euclidean space, $M$ is an ellipsoid under the same conditions when $\alpha=\frac{1}{n+2}$  (See \cite{An-96, b-c-d}). But in the hemisphere, the positivity of the sectional curvatures of the ambient manifold 
	forces $M$ to be umbilic.
\end{remark}

In $3$-dimensional hyperbolic space $\mathbb{H}^{3}$, deforming surfaces by a speed function $\sigma_{2}-1$ is studied in \cite{Andrews-Chen}. For self-similar solutions to a relevant curvature flow in $\mathbb{H}^3$, we obtain the following theorem.
\begin{theorem}\label{thmh}
	Let $M$ be a closed, strictly convex surface in $\mathbb{H}^{3}$ satisfying
	\begin{equation}
	\sigma_{2}(\kappa)-\mathcal{F}=\bar{g}(\lambda\partial_{r},\nu).
	\end{equation}
	If $\mathcal{F}\geq 1$, then $M$ is a slice $\{r_{0}\}\times\mathbb{S}^{n}$ in $\mathbb{H}^{3}$.
\end{theorem}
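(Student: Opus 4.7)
The plan is to show that $M$ is totally umbilical; once this is established, the classical fact that a closed umbilical surface in $\mathbb{H}^{3}$ is a geodesic sphere identifies $M$ with a slice $\{r_{0}\}\times\mathbb{S}^{2}$ in the chosen polar coordinates. The first key ingredient is the hyperbolic identity $\lambda''=\lambda$, which yields the ambient Hessian identity $\bar\nabla^{2}\lambda'=\lambda'\bar g$, so that on $M$ one has $\nabla\lambda'=X^{T}$ and $\nabla^{2}\lambda'=\lambda' g_{M}-uA$, where $X=\lambda\partial_{r}$ and $u=\bar g(X,\nu)$. In particular $\Delta\lambda'=2\lambda'-uH$.

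I would first use the $\sigma_{2}$-linearized operator $L=\sigma_{2}^{ij}\nabla_{i}\nabla_{j}$, which is elliptic on strictly convex $M$ and divergence-free (via Codazzi in the constant-curvature background), to derive the two Minkowski formulas
\[
\int_{M}Hu\,dA=2\int_{M}\lambda'\,dA,\qquad \int_{M}H\lambda'\,dA=2\int_{M}u\sigma_{2}\,dA,
\]
by integrating $\Delta\lambda'$ and $L\lambda'=\lambda'H-2u\sigma_{2}$ (using the two-dimensional identity $\sigma_{2}^{ij}h_{jk}=\sigma_{2}\delta^{i}_{k}$). The defining equation $\sigma_{2}=u+\mathcal{F}$, the Gauss equation $K_{M}=\sigma_{2}-1$, and the Gauss--Bonnet theorem for the topological sphere $M$ then produce the total-curvature identities $\int_{M}\sigma_{2}\,dA=4\pi+A(M)$ and $\int_{M}u\,dA=4\pi+(1-\mathcal{F})A(M)$.

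The next step would exploit the pointwise Newton--Maclaurin inequality $H^{2}\geq 4\sigma_{2}$ (equality iff $\kappa_{1}=\kappa_{2}$), which after integration gives the lower bound $\int_{M}H^{2}\,dA\geq 16\pi+4A(M)$. A matching upper bound is then to be produced from Brendle's Heintze--Karcher inequality in hyperbolic space, $\int_{M}\lambda'/H\,dA\geq \tfrac{1}{2}\int_{M}u\,dA$ (with equality iff $M$ is a geodesic sphere), combined with $H\geq 2\sqrt{\sigma_{2}}=2\sqrt{u+\mathcal{F}}$ and with the hypothesis $\mathcal{F}\geq 1$, which rewrites the constraint as $K_{M}=u+(\mathcal{F}-1)\geq u$. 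Closing this loop forces equality in Newton--Maclaurin, hence umbilicity.

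The hard part will be closing the chain of inequalities in the last step: both Newton--Maclaurin and Heintze--Karcher are saturated only for umbilic surfaces, and one needs the assumption $\mathcal{F}\geq 1$ to align the two saturation conditions. If this direct route stalls, an alternative is to apply the strong maximum principle for $L$ to a judiciously chosen test function on $M$ (for instance $\lambda'-cu$, or $\Phi=1-\lambda^{2}(u+\mathcal{F}-1)$, which vanishes on slices), computing $L\Phi$ from the formulas for $L\lambda'$ and $Lu=\langle\nabla\sigma_{2},X^{T}\rangle+2\lambda'\sigma_{2}-uH\sigma_{2}$, and using $\mathcal{F}\geq 1$ to assign a definite sign to the zeroth-order term.
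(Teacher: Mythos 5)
Your plan is a genuinely different route from the paper's, but it stops short of a proof: the decisive step is explicitly left open, and I do not think it closes as stated. The paper proves Theorem \ref{thmh} by the same two--test-function maximum principle used for the hemisphere case, applied to $W = F/\kappa_1 - \frac{\beta-1}{\beta}\Phi$ and $Z = F\,\mathrm{tr}\,b - \frac{n(\beta-1)}{\beta}\Phi$. The whole point of the hypothesis $\mathcal{F}\geq 1$ there is a single pointwise computation: with $n=2$, $\epsilon=-1$, $F=\sigma_2$, the $\epsilon$-term in $\mathcal{L}Z$ (which is now negative, since $\epsilon=-1$) combines with the $\mathcal{F}$-term to give exactly $(\mathcal{F}-1)(\kappa_1-\kappa_2)^2 \geq 0$, and similarly $J_1$ in Lemma \ref{wmax} reduces to $(\kappa_2/\kappa_1-1)\bigl(\tfrac12\lambda'\kappa_1 + (\mathcal{F}-1)\kappa_1\kappa_2\bigr)\geq 0$. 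The sign condition $\mathcal{F}\geq 1$ is precisely what is needed to offset the negative ambient curvature, after which the Section \ref{sec:beta>1} argument runs unchanged.

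Your integral-geometric route assembles correct ingredients --- the ambient Hessian identity $\bar\nabla^2\lambda' = \lambda'\bar g$, the Minkowski formulas $\int Hu = 2\int\lambda'$ and $\int H\lambda' = 2\int u\sigma_2$, Gauss--Bonnet with $K_M = \sigma_2 - 1$, Newton--Maclaurin $H^2\geq 4\sigma_2$, and (if applicable) Brendle's Heintze--Karcher --- but you never exhibit the contradiction/rigidity that would follow from combining them. You say yourself that ``the hard part will be closing the chain of inequalities in the last step,'' and indeed the chain does not close by any elementary manipulation I can see: Heintze--Karcher gives a lower bound on $\int\lambda'/H$ and Newton--Maclaurin gives an upper bound on $\lambda'/H$ pointwise, but these inequalities pull in the same direction and you never obtain a reverse estimate forcing equality. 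There is a second, independent difficulty: Brendle's Heintze--Karcher inequality (and the divergence-theorem conversion from $\int_\Omega\lambda'$ to $\tfrac13\int_\Sigma u$) requires $u = \bar g(\lambda\partial_r,\nu) > 0$ on $M$, i.e.\ star-shapedness about the coordinate origin. The theorem only assumes strict convexity; since $u = \sigma_2 - \mathcal{F}$ and $\mathcal{F}\geq 1$, the sign of $u$ is not a priori controlled, so the Heintze--Karcher step is not even guaranteed to be available. Finally, a small but real gap at the end: ``$M$ umbilic $\Rightarrow$ geodesic sphere $\Rightarrow$ slice'' needs an extra line (umbilicity plus Codazzi forces $\kappa$ constant, hence $u$ constant, hence the geodesic sphere is concentric with the pole); in the paper this is replaced by the cleaner observation that $\nabla F\equiv 0$ forces $\nu\parallel\partial_r$. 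Your ``alternative'' of a maximum principle with $\lambda'-cu$ or $1-\lambda^2(u+\mathcal{F}-1)$ is in the right spirit but is not computed, so it cannot be assessed; the paper's choice of $W$ and $Z$ is the one known to work.
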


The paper is organized as follows. In Section \ref{Sec:Pre}, we present basic properties of curvature tensors in warped products, then we derive some fundamental formulas for self-similar solutions in warped products with a general curvature  function $F$ satisfying Condition \ref{condtn}. In Section \ref{sec:maxW} and Section \ref{sec:beta>1},  we use a two-step maximum principle to prove the case $\beta>1$ for Theorem \ref{thms}. The case for $\beta=1$ is proved in Section \ref{sec:beta=1}. In the last section, we finish the proof of Corollary \ref{thmsign} and Theorem \ref{thmh}. Throughout this paper, the summation convention is used unless otherwise stated.

%%%%%%%%%%%%%%%%%%%%%%%%%%%%%%%%%%%%%%%
\section{Preliminaries}
\label{Sec:Pre}

Let $N$ be a warped product of the  form $N=[0,\bar{r})\times \mathbb{S}^{n}$  endowed with metric $\bar{g}=dr^{2}+\lambda^{2}(r)g_{\scriptscriptstyle{\mathbb{S}}}$. 
Suppose that $M^n$ $(n\geq 2)$ is a smooth closed strictly convex embedded orientable hypersurface in $N$ satisfying 
\begin{equation*}
	F-\mathcal{F}=\bar{g}(\lambda\partial_{r},\nu),
	\end{equation*}
described as above.
Let $h=(h_{ij})$ denote the second fundamental form with respect to an orthogonal frame $\{e_{1},...,e_{n}\}$ on $M$. The principal curvatures $\kappa_{1},...,\kappa_{n}$ are the eigenvalues of  $h$. 

For convenience we first state the properties of curvature tensors of $(N, \bar{g})$. 
Our convention for the $(1, 3)$- and $(0,4)$-Riemannian curvature tensors of the Levi-Civita connection $\bar{\nabla}$ of $(N,\bar{g})$ are given by
\begin{equation*}
\bar{R}(Y_{1},Y_{2})Y_{3}=\bar{\nabla}_{Y_{1}}\bar{\nabla}_{Y_{2}}Y_{3}-\bar{\nabla}_{Y_{2}}\bar{\nabla}_{Y_{1}}Y_{3}-\bar{\nabla}_{[Y_{1},Y_{2}]}Y_{3}
\end{equation*}
and
\begin{equation*}
\bar{R}(Y_{1},Y_{2},Y_{3},Y_{4})=-\bar{g}(\bar{R}(Y_{1},Y_{2})Y_{3},Y_{4}),
\end{equation*}
respectively, for vector fields $Y_{1},Y_{2},Y_{3},Y_{4}$ on $N$.
Thus the $(0,4)$-Riemannian curvature tensor of $(N,\bar{g})$ is
\begin{equation}
\bar{R}=\frac{1-\lambda'^{2}}{2\lambda^{2}}\bar{g}\owedge \bar{g}-\left(\frac{1-\lambda'^{2}}{\lambda^{2}}+\frac{\lambda''}{\lambda}\right)\bar{g}\owedge dr^{2},
\end{equation}
where $\owedge$ is the Kulkarni-Nomizu product, cf. \cite{ONeill}.

In terms of orthonormal frames $\{e_1,\cdots, e_n,\nu\}$ of $N$ along $M$, we use the conventions
$\bar{R}_{ijkl}=\bar{R}(e_i,e_j,e_k,e_l)$ and $\bar{R}_{\nu ijk}=\bar{R}(\nu,e_i,e_j,e_k)$.
Denote $r_{i}=\bar{g}(\partial_{r},e_{i})$ and $r_{\nu}=\bar{g}(\partial_{r},\nu)$. We have
\begin{equation}
\bar{R}_{ijkl}=\frac{1-\lambda'^{2}}{\lambda^{2}}(\delta_{ik}\delta_{jl}-\delta_{il}\delta_{jk})-\left(\frac{1-\lambda'^{2}}{\lambda^{2}}+\frac{\lambda''}{\lambda}\right)(\delta_{ik}r_{j}r_{l}+\delta_{jl}r_{i}r_{k}-\delta_{il}r_{j}r_{k}-\delta_{jk}r_{i}r_{l}),
\end{equation}
and
\begin{equation}
\bar{R}_{\nu ijk}=-\left(\frac{1-\lambda'^{2}}{\lambda^{2}}+\frac{\lambda''}{\lambda}\right)r_{\nu}(\delta_{ik}r_{j}-\delta_{ij}r_{k}).
\end{equation}

Let $\nabla$ denote the Levi-Civita connection with respect to the induced metric on $M$.
It follows from a direct computation that the covariant derivatives of $r_{\nu}$ and $r_k$ are given by 
\begin{align*}
r_{\nu;l}=\nabla_{l}r_{\nu}=\nabla_{l}\bar{g}(\partial_{r},\nu)=-\frac{\lambda'}{\lambda}r_{l}r_{\nu}+h_{lm}r_{m}
\end{align*}
and
\begin{align*}
r_{k;l}=\nabla_{l}r_{k}=\frac{\lambda^{\prime}}{\lambda}(\delta_{kl}-r_{k}r_{l})-h_{kl}r_{\nu}.
\end{align*}
Thus we obtain the following covariant derivative of the curvature tensor
\begin{align*}
\bar{R}_{\nu ijk;l}
&=\left\{-\left(\frac{1-\lambda'^{2}}{\lambda^{2}}+\frac{\lambda''}{\lambda}\right)'+2\frac{\lambda'}{\lambda}\left(\frac{1-\lambda'^{2}}{\lambda^{2}}+\frac{\lambda''}{\lambda}\right)\right\}r_{l}r_{\nu}(\delta_{ik}r_{j}-\delta_{ij}r_{k})\\
&~+\left(\frac{1-\lambda'^{2}}{\lambda^{2}}+\frac{\lambda''}{\lambda}\right) \left\{-h_{lm}r_{m}(\delta_{ik}r_{j}-\delta_{ij}r_{k})-\frac{\lambda'}{\lambda}
r_{\nu}(\delta_{ik}\delta_{jl}-\delta_{ij}\delta_{kl})+r_{\nu}^{2}(\delta_{ik}h_{jl}-\delta_{ij}h_{kl})\right\}.
\end{align*}

Denote $h_{ijk}=\nabla_{k}h_{ij}$ and $h_{ijkl}=\nabla_{l}\nabla_{k}h_{ij}$. 
Making use of the Gauss equation
\begin{equation*}
R_{ijkl}=\bar{R}_{ijkl}+h_{ik}h_{jl}-h_{il}h_{jk}, 
\end{equation*}
the Coddazi equation
\begin{equation*}
h_{ijk}=h_{ikj}+\bar{R}_{\nu ijk}
\end{equation*}
and the Ricci identity, we get
\begin{equation}\label{hijkl}
\begin{aligned}
h_{ijkl}&=h_{ikjl}+\bar{R}_{\nu ijk;l}\\
&=h_{kilj}+h_{mk}R_{mijl}+h_{im}R_{mkjl}+\bar{R}_{\nu ijk;l}\\
&=h_{klij}+h_{mk}(h_{mj}h_{il}-h_{ij}h_{ml})+h_{im}(h_{mj}h_{kl}-h_{ml}h_{jk})\\
&~+\bar{R}_{\nu kil;j}+\bar{R}_{\nu ijk;l}+h_{mk}\bar{R}_{mijl}+h_{im}\bar{R}_{mkjl}.
\end{aligned}
\end{equation}
By straightforward calculation, we have
\begin{align*}
&\quad \bar{R}_{\nu kil;j}+\bar{R}_{\nu ijk;l}+h_{mk}\bar{R}_{mijl}+h_{im}\bar{R}_{mkjl}\\
&=\left\{-\left(\frac{1-\lambda'^{2}}{\lambda^{2}}+\frac{\lambda''}{\lambda}\right)'+2\frac{\lambda'}{\lambda}\left(\frac{1-\lambda'^{2}}{\lambda^{2}}+\frac{\lambda''}{\lambda}\right)\right\}r_{\nu}(\delta_{kl}r_{i}r_{j}-\delta_{ij}r_{k}r_{l})\\
&~+\frac{1-\lambda'^{2}}{\lambda^{2}}\Big(h_{kj}\delta_{il}-h_{kl}\delta_{ij}+h_{ij}\delta_{kl}-h_{il}\delta_{kj}\Big)\\
&~-\left(\frac{1-\lambda'^{2}}{\lambda^{2}}+\frac{\lambda''}{\lambda}\right)\Big(h_{jk}r_{i}r_{l}-h_{kl}r_{i}r_{j}+h_{ij}r_{k}r_{l}-h_{il}r_{k}r_{j}\Big)\\
&~-\left(\frac{1-\lambda'^{2}}{\lambda^{2}}+\frac{\lambda''}{\lambda}\right)r_{m}(\delta_{kl}r_{i}h_{jm}-\delta_{ki}r_{l}h_{jm}+\delta_{ik}r_{j}h_{lm}-\delta_{ij}r_{k}h_{lm}\\
&~+h_{mk}\delta_{il}r_{j}-h_{mk}\delta_{ij}r_{l}+h_{im}\delta_{kl}r_{j}-h_{im}\delta_{kj}r_{l}).
\end{align*}

%%%%%%%%%%%%%%%%%%%%%%%%%%%%%%%%%%%%%
%\section{Some fundamental formulas}
%\label{Sec:formulas}

Let $b=(b^{ij})$ denote the inverse of the second fundamental form $h=(h_{ij})$ with respect to a given  orthonormal frame $\{e_1,\cdots,e_n\}$ of $M$. Define the operator  $\LF$ by $\LF=\frac{\partial F}{\partial h_{ij}}\nabla_{i}\nabla_{j}$. It follows from Condition  \ref{condtn} that $\LF$ is an elliptic operator. 
Define a function $Z$ by
\begin{equation*}
Z=F\tr b-\frac{n(\beta-1)}{\beta}\Phi,
\end{equation*}
where $\Phi=\int_{0}^{r}\lambda(s)ds$.
We next derive some basic formulas of $\LF$ for further use. 

\begin{proposition}\label{beqn}
	Given a smooth function $F: M\rightarrow \mathbb{R}$ described as above, the following equations hold:
	\begin{align*}
	&(1)& \LF F&=\bar{g}(\lambda\partial_{r},\nabla F)+\beta\lambda'F-\frac{\partial F}{\partial h_{ij}}h_{il}h_{jl}(F-\mathcal{F})+\frac{\partial F}{\partial h_{ij}}\bar{R}_{\nu jli}\bar{g}(\lambda\partial_{r},e_{l}),\\
	&(2)& \LF h_{kl}&=\bar{g}(\lambda\partial_{r},\nabla h_{lk})+\lambda'h_{lk}+h_{lm}h_{km}\mathcal{F}+\bar{R}_{\nu kml}\bar{g}(\lambda\partial_{r},e_{m})\\
	&& &~-\frac{\partial^{2} F}{\partial h_{ij}\partial h_{st}}h_{ijk}h_{stl}-\frac{\partial F}{\partial h_{ij}}h_{mj}h_{mi}h_{kl}+(\beta-1) Fh_{km}h_{ml}\\
	&& &~+\frac{\partial F}{\partial h_{ij}}(\bar{R}_{\nu ikj;l}+\bar{R}_{\nu kli;j}+h_{mi}\bar{R}_{mklj}+h_{km}\bar{R}_{milj}),\\
	&(3)& \LF b^{kl}
	&=\bar{g}(\lambda\partial_{r},\nabla b^{kl})-\lambda'b^{kl}-\delta_{kl}\mathcal{F}-b^{kp}b^{ql}\bar{R}_{\nu pmq}\bar{g}(\lambda\partial_{r},e_{m})\\
	&& &~+b^{kp}b^{ql}\frac{\partial^{2} F}{\partial h_{ij}\partial h_{st}}h_{ijp}h_{stq}+\frac{\partial F}{\partial h_{ij}}h_{mj}h_{mi}b^{kl}-(\beta-1) F\delta_{kl}\\
	&& &~-b^{kp}b^{ql}\frac{\partial F}{\partial h_{ij}}(\bar{R}_{\nu ipj;q}+\bar{R}_{\nu pqi;j}+h_{mi}\bar{R}_{mpqj}+h_{pm}\bar{R}_{miqj})\\
	&& &~+2b^{ks}b^{pt}b^{lq}\frac{\partial F}{\partial h_{ij}}h_{sti}h_{pqj},\\
	&(4)& \LF \Phi&=\lambda'\sum_{i}\frac{\partial F}{\partial h_{ii}}-\beta F(F-\mathcal{F}),\\
	&(5)&\LF Z
	&=2\frac{\partial F}{\partial h_{ij}}\nabla_{i}F\nabla_{j}\tr b+\bar{g}(\lambda\partial_{r},\nabla (F\tr b))+(\beta-1)\lambda'(F\tr b-\frac{n}{\beta}\sum_{i}\frac{\partial F}{\partial h_{ii}})\\
	&& &~+(\frac{\partial F}{\partial h_{ij}}h_{il}h_{jl}\tr b-\beta nF)\mathcal{F}+Fb^{kp}b^{qk}\frac{\partial^{2} F}{\partial h_{ij}\partial h_{st}}h_{ijp}h_{stq}\\
	&& &~+2Fb^{ks}b^{pt}b^{kq}\frac{\partial F}{\partial h_{ij}}h_{sti}h_{pqj}+(\tr b\frac{\partial F}{\partial h_{ij}}-Fb^{ki}b^{jk})\bar{R}_{\nu imj}\bar{g}(\lambda\partial_{r},e_{m})\\
	&& &~-Fb^{kp}b^{qk}\frac{\partial F}{\partial h_{ij}}(\bar{R}_{\nu ipj;q}+\bar{R}_{\nu pqi;j}+h_{mi}\bar{R}_{mpqj}+h_{pm}\bar{R}_{miqj}).\\
	\end{align*}
\end{proposition}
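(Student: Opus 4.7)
The plan is to verify the five identities by direct tensor calculation, using the self-similar equation $F - \mathcal{F} = \bar{g}(\lambda \partial_r, \nu)$, the conformal identity $\bar\nabla_Y(\lambda\partial_r) = \lambda' Y$, the derivatives of $r_i$ and $r_\nu$ recorded above, the Codazzi equation $h_{ijk} = h_{ikj} + \bar R_{\nu ijk}$, homogeneity in the form $\frac{\partial F}{\partial h_{ij}} h_{ij} = \beta F$, and the commutator identity \eqref{hijkl} for fourth covariant derivatives of $h$.

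For (1), first differentiate the self-similar equation once: combining $\bar\nabla_{e_l}(\lambda\partial_r) = \lambda' e_l$ with the Weingarten relation $\bar\nabla_{e_l}\nu = h_{lm}e_m$ gives $\nabla_l F = h_{lm}\bar{g}(\lambda\partial_r, e_m)$. Differentiate once more, using the Gauss formula together with the stored expression for $\nabla_j r_i$, to obtain $\nabla_j \nabla_i F = h_{imj}\bar{g}(\lambda\partial_r, e_m) + \lambda' h_{ij} - h_{im} h_{jm}(F - \mathcal{F})$. Contract with $\frac{\partial F}{\partial h_{ij}}$, use Codazzi to rewrite $h_{imj}$ as $h_{ijm}$ plus a Riemann correction so that $\frac{\partial F}{\partial h_{ij}} h_{ijm} = \nabla_m F$, and invoke homogeneity; this yields (1).

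For (2), start from $\LF h_{kl} = \frac{\partial F}{\partial h_{ij}} h_{klij}$ and apply the commutator \eqref{hijkl} to swap $h_{klij}$ with $h_{ijkl}$, producing the Riemann and quadratic $h$ terms already catalogued in the preliminaries. Then use the second-derivative identity $\frac{\partial F}{\partial h_{ij}} h_{ijkl} = \nabla_l\nabla_k F - \frac{\partial^2 F}{\partial h_{ij}\partial h_{st}} h_{ijk} h_{stl}$, substitute the $\nabla_l\nabla_k F$ computed en route to (1) so that the self-similar equation is built in, and simplify using Euler's relation. Formula (3) then follows mechanically by differentiating $b^{kp} h_{pl} = \delta_{kl}$ twice, which yields $\nabla_j\nabla_i b^{kl} = -b^{kp}b^{ql} h_{pqij} + 2b^{ks}b^{pt}b^{lq}h_{sti}h_{pqj}$, and substituting (2) for $\frac{\partial F}{\partial h_{ij}}h_{pqij}$.

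For (4), compute $\nabla_i \Phi = \lambda r_i$ and, using $\nabla_j r_i$ from the preliminaries, obtain $\nabla_j \nabla_i \Phi = \lambda'\delta_{ij} - h_{ij}(F - \mathcal{F})$; contracting with $\frac{\partial F}{\partial h_{ij}}$ and invoking homogeneity gives (4) at once. For (5), apply the Leibniz rule
\begin{equation*}
\LF(F\tr b) = (\LF F)\tr b + F\,\LF(\tr b) + 2\frac{\partial F}{\partial h_{ij}}\nabla_i F\,\nabla_j \tr b,
\end{equation*}
insert (1) and the trace of (3), then subtract $\frac{n(\beta-1)}{\beta}\LF\Phi$ via (4). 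The normalization in $Z = F\tr b - \frac{n(\beta-1)}{\beta}\Phi$ is calibrated precisely so that the $F(F - \mathcal{F})$ contributions from (1) and (4) combine with the remaining pieces to produce the stated right-hand side. The main obstacle throughout is the careful bookkeeping of the numerous Riemann and $h$-quadratic correction terms produced by \eqref{hijkl}, and verifying the exact cancellations in (5) that justify the choice of normalization in the definition of $Z$.
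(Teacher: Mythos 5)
Your proposal tracks the paper's own proof step for step: differentiate the self-similar equation to get $\nabla_i F = h_{il}\bar{g}(\lambda\partial_r,e_l)$ and then $\nabla_i\nabla_j F$ for (1); apply the commutator \eqref{hijkl} and the second-derivative identity $\frac{\partial F}{\partial h_{ij}}h_{ijkl} = \nabla_l\nabla_k F - \frac{\partial^2 F}{\partial h_{ij}\partial h_{st}}h_{ijk}h_{stl}$ for (2); differentiate $b^{kp}h_{pl}=\delta_{kl}$ twice and substitute (2) for (3); compute $\nabla_i\nabla_j\Phi$ and contract for (4); and use the Leibniz rule on $F\tr b$ together with (1), the trace of (3), and (4) for (5). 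This is precisely the route the paper takes, and you have even caught that (5) needs the trace of (3) rather than (4), which the paper's wording garbles. The only small caveat is that your display $\nabla_j\nabla_i b^{kl}=-b^{kp}b^{ql}h_{pqij}+2b^{ks}b^{pt}b^{lq}h_{sti}h_{pqj}$ is really the symmetrized form valid after contraction with the symmetric tensor $\frac{\partial F}{\partial h_{ij}}$; the raw Hessian of $b^{kl}$ has two distinct cubic-in-$b$ terms, $b^{ks}b^{pt}b^{lq}h_{sti}h_{pqj}+b^{kp}b^{ls}b^{qt}h_{sti}h_{pqj}$, which only combine to the factor-of-two form inside $\LF$. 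That is a cosmetic shortcut, not a gap.
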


\begin{proof}
	(1) From
	\begin{equation*}
	\bar{\nabla}_{e_{i}}\lambda\partial_{r}=\lambda'e_{i},
	\end{equation*}
	we know
	\begin{equation*}
	\nabla_{i}F=\bar{g}(\lambda\partial_{r},h_{il}e_{l})
	\end{equation*}
	and
	\begin{align*}
	\nabla_{i}\nabla_{j}F&=h_{jli}\bar{g}(\lambda\partial_{r},e_{l})+\lambda'h_{ij}-h_{il}h_{jl}(F-\mathcal{F})\\
	&=h_{ijl}\bar{g}(\lambda\partial_{r},e_{l})+\lambda'h_{ij}-h_{il}h_{jl}(F-\mathcal{F})+\bar{R}_{\nu jli}\bar{g}(\lambda\partial_{r},e_{l}).
	\end{align*}
Then from $\frac{\partial F}{\partial h_{ij}}h_{ij}=\beta F$ we get
	\begin{align*}
	\LF F&=\bar{g}(\lambda\partial_{r},\nabla F)+\beta\lambda'F-\frac{\partial F}{\partial h_{ij}}h_{il}h_{jl}(F-\mathcal{F})+\frac{\partial F}{\partial h_{ij}}\bar{R}_{\nu jli}\bar{g}(\lambda\partial_{r},e_{l}).
	\end{align*}
	
	(2) From \eqref{hijkl}, we have
	\begin{align*}
	&\quad \LF h_{kl}=\frac{\partial F}{\partial h_{ij}}h_{klij}\\
	&=\frac{\partial F}{\partial h_{ij}}\Big(h_{ijkl}+h_{mi}(h_{ml}h_{kj}-h_{kl}h_{mj})+h_{km}(h_{ml}h_{ij}-h_{mj}h_{li})\\
	&~+\bar{R}_{\nu ikj;l}+\bar{R}_{\nu kli;j}+h_{mi}\bar{R}_{mklj}+h_{km}\bar{R}_{milj}\Big)\\
	&=\nabla_{l}\nabla_{k}F-\frac{\partial^{2} F}{\partial h_{ij}\partial h_{st}}h_{ijk}h_{stl}-\frac{\partial F}{\partial h_{ij}}h_{mj}h_{mi}h_{kl}+\beta Fh_{km}h_{ml}\\
	&~+\frac{\partial F}{\partial h_{ij}}(\bar{R}_{\nu ikj;l}+\bar{R}_{\nu kli;j}+h_{mi}\bar{R}_{mklj}+h_{km}\bar{R}_{milj})\\
	&=\bar{g}(\lambda\partial_{r},\nabla h_{lk})+\lambda'h_{lk}+h_{lm}h_{km}\mathcal{F}+\bar{R}_{\nu kml}\bar{g}(\lambda\partial_{r},e_{m})\\
	&~-\frac{\partial^{2} F}{\partial h_{ij}\partial h_{st}}h_{ijk}h_{stl}-\frac{\partial F}{\partial h_{ij}}h_{mj}h_{mi}h_{kl}+(\beta-1) Fh_{km}h_{ml}\\
	&~+\frac{\partial F}{\partial h_{ij}}(\bar{R}_{\nu ikj;l}+\bar{R}_{\nu kli;j}+h_{mi}\bar{R}_{mklj}+h_{km}\bar{R}_{milj}).
	\end{align*}

	(3) Since $h_{km}b^{ml}=\delta_{kl}$, we have
	\begin{equation}\label{eq:b_klj}
	\nabla_{j}b^{kl}=-b^{kp}b^{lq}\nabla_{j}h_{pq}.
	\end{equation}
	And,
	\begin{align*}
	\nabla_{i}\nabla_{j}b^{kl}&=-\nabla_{i}(b^{kp}b^{lq}\nabla_{j}h_{pq})\\
	&=-b^{kp}b^{ql}\nabla_{i}\nabla_{j}h_{pq}+b^{ks}b^{pt}b^{lq}\nabla_{i}h_{st}\nabla_{j}h_{pq}+b^{kp}b^{ls}b^{qt}\nabla_{i}h_{st}\nabla_{j}h_{pq}.
	\end{align*}
	
	Then, using $(2)$ we obtain
	\begin{align*}
	\LF b^{kl} &=-b^{kp}b^{ql}\frac{\partial F}{\partial h_{ij}}\nabla_{i}\nabla_{j}h_{pq}+2b^{ks}b^{pt}b^{lq}\frac{\partial F}{\partial h_{ij}}\nabla_{i}h_{st}\nabla_{j}h_{pq}\\
	&=-b^{kp}b^{ql}\Big(\bar{g}(\lambda\partial_{r},\nabla h_{pq})+\lambda'h_{pq}+h_{pm}h_{qm}\mathcal{F}+\bar{R}_{\nu pmq}\bar{g}(\lambda\partial_{r},e_{m})\\
	&~-\frac{\partial^{2} F}{\partial h_{ij}\partial h_{st}}h_{ijp}h_{stq}-\frac{\partial F}{\partial h_{ij}}h_{mj}h_{mi}h_{pq}+(\beta-1) Fh_{pm}h_{mq}\\
	&~+\frac{\partial F}{\partial h_{ij}}(\bar{R}_{\nu ipj;q}+\bar{R}_{\nu pqi;j}+h_{mi}\bar{R}_{mpqj}+h_{pm}\bar{R}_{miqj})\Big)\\
	&~+2b^{ks}b^{pt}b^{lq}\frac{\partial F}{\partial h_{ij}}h_{sti}h_{pqj}\\
	&=\bar{g}(\lambda\partial_{r},\nabla b^{kl})-\lambda'b^{kl}-\delta_{kl}\mathcal{F}-b^{kp}b^{ql}\bar{R}_{\nu pmq}\bar{g}(\lambda\partial_{r},e_{m})\\
	&~+b^{kp}b^{ql}\frac{\partial^{2} F}{\partial h_{ij}\partial h_{st}}h_{ijp}h_{stq}+\frac{\partial F}{\partial h_{ij}}h_{mj}h_{mi}b^{kl}-(\beta-1) F\delta_{kl}\\
	&~-b^{kp}b^{ql}\frac{\partial F}{\partial h_{ij}}(\bar{R}_{\nu ipj;q}+\bar{R}_{\nu pqi;j}+h_{mi}\bar{R}_{mpqj}+h_{pm}\bar{R}_{miqj})\\
	&~+2b^{ks}b^{pt}b^{lq}\frac{\partial F}{\partial h_{ij}}h_{sti}h_{pqj}.
	\end{align*}

	(4) We know
	\begin{align*}
	\nabla_{i}\Phi=\bar{\nabla}_{e_{i}}\Phi=\lambda(r)\bar{\nabla}_{e_{i}}r=\bar{g}(\lambda\partial_{r},e_{i})
	\end{align*}
	and
	\begin{align*}
	\nabla_{i}\nabla_{j}\Phi=\lambda'\delta_{ij}-h_{ij}\bar{g}(\lambda\partial_{r},\nu)=\lambda'\delta_{ij}-h_{ij}(F-\mathcal{F}).
	\end{align*}
	Then
	\begin{align*}
	\LF \Phi=\lambda'\sum_{i}\frac{\partial F}{\partial h_{ii}}-\beta F(F-\mathcal{F}).
	\end{align*}

	(5) From $(4)$ we know
	\begin{align*}
	\LF~\tr b &=\bar{g}(\lambda\partial_{r},\nabla\tr b)-\lambda'\tr b-n\mathcal{F}-b^{kp}b^{qk}\bar{R}_{\nu pmq}\bar{g}(\lambda\partial_{r},e_{m})\\
	&~+b^{kp}b^{qk}\frac{\partial^{2} F}{\partial h_{ij}\partial h_{st}}h_{ijp}h_{stq}+\frac{\partial F}{\partial h_{ij}}h_{mj}h_{mi}\tr b-n(\beta-1) F\\
	&~-b^{kp}b^{qk}\frac{\partial F}{\partial h_{ij}}(\bar{R}_{\nu ipj;q}+\bar{R}_{\nu pqi;j}+h_{mi}\bar{R}_{mpqj}+h_{pm}\bar{R}_{miqj})\\
	&~+2b^{ks}b^{pt}b^{kq}\frac{\partial F}{\partial h_{ij}}h_{sti}h_{pqj}.
	\end{align*}
	Then we have
	\begin{align*}
	\LF Z&=2\frac{\partial F}{\partial h_{ij}}\nabla_{i}F\nabla_{j}\tr b+\tr b\LF F+F\LF\tr b-\frac{n(\beta-1)}{\beta}\LF \Phi\\
	&=2\frac{\partial F}{\partial h_{ij}}\nabla_{i}F\nabla_{j}\tr b+\tr b\bar{g}(\lambda\partial_{r},\nabla F)+\beta\lambda'F\tr b\\
	&~-\frac{\partial F}{\partial h_{ij}}h_{il}h_{jl}(F-\mathcal{F})\tr b+\tr b\frac{\partial F}{\partial h_{ij}}\bar{R}_{\nu jli}\bar{g}(\lambda\partial_{r},e_{l})\\
	&~+F\bar{g}(\lambda\partial_{r},\nabla\tr b)-\lambda'F\tr b-nF\mathcal{F}-Fb^{kp}b^{qk}\bar{R}_{\nu pmq}\bar{g}(\lambda\partial_{r},e_{m})\\
	&~+Fb^{kp}b^{qk}\frac{\partial^{2} F}{\partial h_{ij}\partial h_{st}}h_{ijp}h_{stq}+F\frac{\partial F}{\partial h_{ij}}h_{mj}h_{mi}\tr b-n(\beta-1) F^{2}\\
	&~-Fb^{kp}b^{qk}\frac{\partial F}{\partial h_{ij}}(\bar{R}_{\nu ipj;q}+\bar{R}_{\nu pqi;j}+h_{mi}\bar{R}_{mpqj}+h_{pm}\bar{R}_{miqj})\\
	&~+2Fb^{ks}b^{pt}b^{kq}\frac{\partial F}{\partial h_{ij}}h_{sti}h_{pqj}-\frac{n(\beta-1)}{\beta}\lambda'\sum_{i}\frac{\partial F}{\partial h_{ii}}+n(\beta-1) F(F-\mathcal{F})\\
	&=2\frac{\partial F}{\partial h_{ij}}\nabla_{i}F\nabla_{j}\tr b+\bar{g}(\lambda\partial_{r},\nabla (F\tr b))+(\beta-1)\lambda'(F\tr b-\frac{n}{\beta}\sum_{i}\frac{\partial F}{\partial h_{ii}})\\
	&~+(\frac{\partial F}{\partial h_{ij}}h_{il}h_{jl}\tr b-\beta nF)\mathcal{F}+Fb^{kp}b^{qk}\frac{\partial^{2} F}{\partial h_{ij}\partial h_{st}}h_{ijp}h_{stq}\\
	&~+2Fb^{ks}b^{pt}b^{kq}\frac{\partial F}{\partial h_{ij}}h_{sti}h_{pqj}+(\tr b\frac{\partial F}{\partial h_{ij}}-Fb^{ki}b^{jk})\bar{R}_{\nu imj}\bar{g}(\lambda\partial_{r},e_{m})\\
	&~-Fb^{kp}b^{qk}\frac{\partial F}{\partial h_{ij}}(\bar{R}_{\nu ipj;q}+\bar{R}_{\nu pqi;j}+h_{mi}\bar{R}_{mpqj}+h_{pm}\bar{R}_{miqj}).
	\end{align*}
\end{proof}

Notice that for a warped product $N$, when $\lambda(r)=r$, $\sin r$, or $\sinh r$, $N$ is Euclidean space, the sphere $\mathbb{S}^{n+1}$ or hyperbolic space $\mathbb{H}^{n+1}$ with constant sectional curvature $\epsilon=0$, $1$ or $-1$ respectively. For the rest of the paper, we focus on spaces of constant sectional curvature.  In these cases, we have
\begin{equation*}
\bar{R}_{ijkl}=\epsilon(\delta_{ik}\delta_{jl}-\delta_{il}\delta_{jk}) \text{ and } 
\bar{R}_{\nu ijk}=0.
\end{equation*}

Therefore, 
\begin{align*}
&\quad \bar{R}_{\nu kml}\bar{g}(\lambda\partial_{r},e_{m})+\frac{\partial F}{\partial h_{ij}}(\bar{R}_{\nu ikj;l}+\bar{R}_{\nu kli;j}+h_{mi}\bar{R}_{mklj}+h_{km}\bar{R}_{milj})\\
&=\epsilon\frac{\partial F}{\partial h_{ij}}\Big(h_{mi}(\delta_{ml}\delta_{kj}-\delta_{mj}\delta_{kl})+h_{km}(\delta_{ml}\delta_{ij}-\delta_{mj}\delta_{il})\Big)\\
&=\epsilon\frac{\partial F}{\partial h_{ij}}\Big(h_{il}\delta_{kj}-h_{ij}\delta_{kl}+h_{kl}\delta_{ij}-h_{kj}\delta_{il}\Big)
\end{align*}
and
\begin{align*}
&\quad (\tr b\frac{\partial F}{\partial h_{ij}}-Fb^{ki}b^{jk})\bar{R}_{\nu imj}\bar{g}(\lambda\partial_{r},e_{m})\\
&~-Fb^{kp}b^{qk}\frac{\partial F}{\partial h_{ij}}(\bar{R}_{\nu ipj;q}+\bar{R}_{\nu pqi;j}+h_{mi}\bar{R}_{mpqj}+h_{pm}\bar{R}_{miqj})\\
&=-\epsilon Fb^{kp}b^{qk}\frac{\partial F}{\partial h_{ij}}\Big(h_{iq}\delta_{pj}-h_{pj}\delta_{iq}+h_{pq}\delta_{ij}-h_{ij}\delta_{pq}\Big)\\
&=\epsilon F(\beta F\tr(b^{2})-\tr b\sum_{i}\frac{\partial F}{\partial h_{ii}}).
\end{align*}

Then in spaces of constant sectional curvature,  $(2)$ and $(4)$ in Proposition \ref{beqn} reduce the following equations.
\begin{corollary}\label{ccscmp}
	\begin{align*}
	&(i)& \LF h_{kl}&=\bar{g}(\lambda\partial_{r},\nabla h_{lk})+\lambda'h_{lk}+h_{lm}h_{km}\mathcal{F}-\frac{\partial^{2} F}{\partial h_{ij}\partial h_{st}}h_{ijk}h_{stl}-\frac{\partial F}{\partial h_{ij}}h_{mj}h_{mi}h_{kl}\\
	&& &~+(\beta-1) Fh_{km}h_{ml}+\epsilon\frac{\partial F}{\partial h_{ij}}\Big(h_{il}\delta_{kj}-h_{ij}\delta_{kl}+h_{kl}\delta_{ij}-h_{kj}\delta_{il}\Big),\\
	&(ii)&\LF Z
	&=2\frac{\partial F}{\partial h_{ij}}\nabla_{i}F\nabla_{j}\tr b+\bar{g}(\lambda\partial_{r},\nabla (F\tr b))+(\beta-1)\lambda'(F\tr b-\frac{n}{\beta}\sum_{i}\frac{\partial F}{\partial h_{ii}})\\
	&& &~+(\frac{\partial F}{\partial h_{ij}}h_{il}h_{jl}\tr b-\beta nF)\mathcal{F}+Fb^{kp}b^{qk}\frac{\partial^{2} F}{\partial h_{ij}\partial h_{st}}h_{ijp}h_{stq}\\
	&& &~+2Fb^{ks}b^{pt}b^{kq}\frac{\partial F}{\partial h_{ij}}h_{sti}h_{pqj}+\epsilon F(\beta F\tr(b^{2})-\tr b\sum_{i}\frac{\partial F}{\partial h_{ii}}).
	\end{align*}
\end{corollary}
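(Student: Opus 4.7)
The plan is to derive both identities by direct substitution into formulas (2) and (5) of Proposition \ref{beqn}, using the explicit form of the ambient curvature in a space of constant sectional curvature $\epsilon$. First I would record the three structural facts I will use repeatedly: in a space form one has $\bar{R}_{ijkl}=\epsilon(\delta_{ik}\delta_{jl}-\delta_{il}\delta_{jk})$; the mixed components $\bar{R}_{\nu ijk}$ vanish identically, so their covariant derivatives $\bar{R}_{\nu ijk;l}$ vanish as well; and Euler's relation gives $\frac{\partial F}{\partial h_{ij}}h_{ij}=\beta F$. The first two facts immediately annihilate many of the curvature terms appearing in (2) and (5).

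For part (i), the only surviving ambient-curvature contribution in formula (2) of Proposition \ref{beqn} is $\frac{\partial F}{\partial h_{ij}}\bigl(h_{mi}\bar{R}_{mklj}+h_{km}\bar{R}_{milj}\bigr)$. Plugging in the constant-curvature expression and expanding the Kronecker symbols produces the four-term expression $\epsilon\frac{\partial F}{\partial h_{ij}}\bigl(h_{il}\delta_{kj}-h_{ij}\delta_{kl}+h_{kl}\delta_{ij}-h_{kj}\delta_{il}\bigr)$ displayed just before the corollary, and all other terms in (2) are left untouched; this gives (i).

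For part (ii), the same mechanism applies to formula (5). The term $(\tr b\,\frac{\partial F}{\partial h_{ij}}-Fb^{ki}b^{jk})\bar{R}_{\nu imj}\bar{g}(\lambda\partial_{r},e_{m})$ and the two derivative pieces $-Fb^{kp}b^{qk}\frac{\partial F}{\partial h_{ij}}\bigl(\bar{R}_{\nu ipj;q}+\bar{R}_{\nu pqi;j}\bigr)$ vanish outright, while for the remaining algebraic curvature terms I would substitute $\bar{R}_{ijkl}=\epsilon(\delta_{ik}\delta_{jl}-\delta_{il}\delta_{jk})$ and then successively contract with $b^{kp}b^{qk}$, exploiting $h_{km}b^{ml}=\delta_{kl}$ and Euler's identity. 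Each of the four Kronecker terms collapses to either $\beta F\,\tr(b^{2})$ or $\tr b\sum_{i}\frac{\partial F}{\partial h_{ii}}$, and after signs the sum telescopes to the single expression $\epsilon F\bigl(\beta F\tr(b^{2})-\tr b\sum_{i}\frac{\partial F}{\partial h_{ii}}\bigr)$ that appears in (ii).

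The only delicate point is the bookkeeping of indices in part (ii): one must be careful to pair each Kronecker delta with the correct $b$-factor and to recognize which contractions reproduce $\tr b$, $\tr(b^{2})$, or $\sum_{i}\frac{\partial F}{\partial h_{ii}}$ once the sum is reorganized. Beyond this routine index manipulation, no new ideas are required, since the general expressions have already been established in Proposition \ref{beqn}.
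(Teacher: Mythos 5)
Your proposal is correct and follows essentially the same route as the paper: specialize Proposition \ref{beqn} (2) and (5) to constant curvature by substituting $\bar{R}_{ijkl}=\epsilon(\delta_{ik}\delta_{jl}-\delta_{il}\delta_{jk})$ and using $\bar{R}_{\nu ijk}\equiv 0$ (hence $\bar{R}_{\nu ijk;l}=0$), then contract the Kronecker symbols and invoke Euler's relation to obtain the displayed $\epsilon$-terms. The index bookkeeping you flag as the only delicate point is exactly what the paper carries out in the two displayed computations just before Corollary \ref{ccscmp}.
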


For convenience, we call the following term 
\begin{equation}\label{newt}
\epsilon F(\beta F\tr(b^{2})-\tr b\sum_{i}\frac{\partial F}{\partial h_{ii}})
\end{equation} 
 the $\epsilon$-term in $\LF Z$. It
vanishes for the Euclidean space case and we need to estimate it in other cases. In fact, we have the following lemma.

\begin{lemma}\label{newp}
	If $F$ satisfies i), ii) and iii) in Condition \ref{condtn} and $\kappa\in\Gamma_{+}$, we have
	\begin{align*}
	\beta F\tr(b^{2})-\tr b\sum_{i}\frac{\partial F}{\partial h_{ii}}\geq 0
	\end{align*}
	and the equality occurs if and only if $\kappa_{1}=...=\kappa_{n}$.
\end{lemma}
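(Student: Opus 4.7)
\smallskip

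The plan is to evaluate the inequality at a point and frame where $h=(h_{ij})$ is diagonal. At such a point, $\frac{\partial F}{\partial h_{ii}}=\frac{\partial F}{\partial \kappa_i}$, which I will abbreviate by $f_i$. Condition ii) gives Euler's identity $\sum_i \kappa_i f_i = \beta F$, and since $b$ is also diagonal, $\operatorname{tr}(b^2)=\sum_i \kappa_i^{-2}$ and $\operatorname{tr} b = \sum_i \kappa_i^{-1}$. So the quantity to bound becomes
\begin{equation*}
A \;:=\; \Bigl(\sum_k \kappa_k f_k\Bigr)\Bigl(\sum_i \frac{1}{\kappa_i^{2}}\Bigr)\;-\;\Bigl(\sum_k \frac{1}{\kappa_k}\Bigr)\Bigl(\sum_i f_i\Bigr).
\end{equation*}

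Next I would expand both products and note that the diagonal terms $i=k$ cancel, leaving
\begin{equation*}
A=\sum_{i\ne k}\Bigl(\frac{\kappa_k f_k}{\kappa_i^{2}}-\frac{f_i}{\kappa_k}\Bigr)
=\sum_{i\ne k}\frac{\kappa_k^{2}f_k-\kappa_i^{2}f_i}{\kappa_i^{2}\kappa_k}.
\end{equation*}
I then symmetrize in $(i,k)\leftrightarrow(k,i)$ to obtain
\begin{equation*}
A=\frac{1}{2}\sum_{i\ne k}(\kappa_k^{2}f_k-\kappa_i^{2}f_i)\,\frac{\kappa_k-\kappa_i}{\kappa_i^{2}\kappa_k^{2}},
\end{equation*}
which reduces the problem to proving $(\kappa_k^{2}f_k-\kappa_i^{2}f_i)(\kappa_k-\kappa_i)\geq 0$ for all pairs $i,k$.

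The core step is then to upgrade condition iii) from $\kappa f$ to $\kappa^{2}f$. Assuming $\kappa_k>\kappa_i$, condition iii) gives $\kappa_k f_k\geq \kappa_i f_i$, and multiplying by $\kappa_k>0$ yields $\kappa_k^{2}f_k\geq \kappa_k\kappa_i f_i$; since $f_i>0$ by i) and $\kappa_k>\kappa_i>0$, we also have $\kappa_k\kappa_i f_i>\kappa_i^{2}f_i$, so $\kappa_k^{2}f_k>\kappa_i^{2}f_i$ (strictly). This gives $A\geq 0$.

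For the equality case, the strict monotonicity just obtained shows that each summand in the symmetrized expression for $A$ vanishes only when $\kappa_i=\kappa_k$; hence $A=0$ forces $\kappa_1=\cdots=\kappa_n$. Conversely, when all principal curvatures coincide, symmetry of $F$ gives $f_1=\cdots=f_n$ and a direct substitution shows $A=0$. I expect the main subtlety to be precisely this upgrading step — making sure the single strictness coming from i) (positivity of $f_i$), combined with the non-strict condition iii), is enough to get the sharp characterization of equality; the rest is a short Euler-identity calculation together with a standard symmetrization.
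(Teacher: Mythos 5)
Your proof is correct and follows essentially the same route as the paper's: diagonalize, use Euler's identity from ii), expand the double sum, observe the diagonal terms cancel, symmetrize over $(i,k)$, and then upgrade condition iii) from $\kappa f$ to $\kappa^2 f$ using the positivity from i) to get the pairwise inequality and its strictness. The paper compresses all of this into a one-line chain of equalities ending with ``Using i), ii) and iii)\ldots we finish the proof''; your write-up just makes the upgrading step and the equality characterization explicit, which is exactly where the content lives.
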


\begin{proof}
	In fact,
	\begin{align*}
	&\quad \beta F\tr(b^{2})-\tr b\sum_{i}\frac{\partial F}{\partial h_{ii}}
	=\sum_{i,j}\frac{\partial F}{\partial\kappa_{i}}(\kappa_{i}\kappa_{j}^{-2}-\kappa_{j}^{-1})\\
	&=\sum_{i\neq j}\kappa_{i}^{-2}\kappa_{j}^{-2}\frac{\partial F}{\partial\kappa_{i}}\kappa_{i}^{2}(\kappa_{i}-\kappa_{j})
	=\sum_{i>j}\kappa_{i}^{-2}\kappa_{j}^{-2}(\frac{\partial F}{\partial\kappa_{i}}\kappa_{i}^{2}-\frac{\partial F}{\partial\kappa_{i}}\kappa_{i}^{2})(\kappa_{i}-\kappa_{j}).
	\end{align*}
	Using i), ii) and iii) in Condition \ref{condtn}, we finish the proof.
\end{proof}

%%%%%%%%%%%%%%%%%%%%%%%%%%%%%%%%%%%%%%%%%%%%%%%%
\section{Analysis at the maximum points of $W$}
\label{sec:maxW}

In this section and next section, we prove Theorem \ref{thms} for $\beta>1$. The proof is a delicate application of the maximum principle to two test functions  
$W=\frac{F}{\kappa_{1}}-\frac{\beta-1}{\beta}\Phi$ and $Z$, where $\kappa_{1}$ is the smallest principal curvature of $M$. The idea comes from \cite{c-d,b-c-d} and is used in \cite{GaoLiMa}. The following lemma is employed to analyze the maximum points of $W$.

\begin{lemma}[\cite{b-c-d}]\label{bcd}
	Let $\mu$ denote the multiplicity of $\kappa_{1}$ at a point $\bar{x}$, i.e., $\kappa_{1}(\bar{x})=\cdots=\kappa_{\mu}(\bar{x})<\kappa_{\mu+1}(\bar{x})$. Suppose that $\varphi$ is a smooth function such that $\varphi \leq \kappa_{1}$ everywhere and $\varphi(\bar{x})=\kappa_{1}(\bar{x})$. Then, at $\bar{x}$, we have\\
	i) $h_{kli}=\nabla_{i} \varphi \delta_{kl}$ for $1\leq k,l\leq \mu$.\\
	ii) $\nabla_{i}\nabla_{i} \varphi \leq h_{11ii}-2\sum_{l>\mu}(\kappa_{l}-\kappa_{1})^{-1}h_{1li}^{2}.$
\end{lemma}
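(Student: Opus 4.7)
The plan is to recover $\kappa_1$ at $\bar x$ as the minimum of an explicit family of smooth Rayleigh quotients, and to compare the smooth lower barrier $\varphi$ with members of this family rather than directly with the only Lipschitz function $\kappa_1$. Concretely, I fix an orthonormal frame $\{e_1,\dots,e_n\}$ at $\bar x$ that diagonalizes $h_{ij}(\bar x)=\kappa_i\delta_{ij}$, and extend it to a neighborhood by parallel transport along radial geodesics from $\bar x$. In this moving frame the components $h_{ij}(x)$ are smooth, and for any unit vector $v\in\mathbb{R}^n$ the function $\psi_v(x):=h_{ij}(x)v_iv_j$ satisfies $\psi_v(x)\geq\kappa_1(x)\geq\varphi(x)$; moreover, when $v$ is supported in the first $\mu$ coordinates we have $\psi_v(\bar x)=\kappa_1(\bar x)=\varphi(\bar x)$, so $\psi_v-\varphi$ attains a local minimum at $\bar x$.

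For part (i), the first-order condition $\nabla_i(\psi_v-\varphi)(\bar x)=0$ reads $h_{kli}(\bar x) v_k v_l=\nabla_i\varphi(\bar x)$. Taking $v=e_k$ with $k\leq\mu$ gives $h_{kki}=\nabla_i\varphi$, and polarizing via $v=(e_k+e_l)/\sqrt2$ with $1\leq k\neq l\leq\mu$ then forces $h_{kli}=0$; together these produce the identity $h_{kli}=\nabla_i\varphi\cdot\delta_{kl}$.

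For part (ii), I refine the test family by allowing the trial direction to rotate with $x$. Along the unit-speed geodesic $x(t)$ of $M$ with $\dot x(0)=e_i$, choose trial vectors
\begin{equation*}
\omega(t)=e_1+t\sum_{l>\mu}b_l\,e_l,
\end{equation*}
where the real parameters $b_l$ will be optimized at the end, and set $g(t):=h_{ij}(x(t))\omega_i(t)\omega_j(t)/|\omega(t)|^2$. By the Rayleigh characterization $g(t)\geq\kappa_1(x(t))\geq\varphi(x(t))$ with equality at $t=0$, so $\ddot g(0)\geq\nabla_i\nabla_i\varphi$. Using $h_{1l}(\bar x)=0$ for $l>\mu$ and $h_{ll}(\bar x)=\kappa_l$, the quotient rule and the Leibniz formula give
\begin{equation*}
\ddot g(0)=h_{11ii}+4\sum_{l>\mu}h_{1li}\,b_l+2\sum_{l>\mu}(\kappa_l-\kappa_1)b_l^2.
\end{equation*}
Since $\kappa_l>\kappa_1$ for $l>\mu$, the quadratic in $b_l$ is minimized at $b_l=-h_{1li}/(\kappa_l-\kappa_1)$, at which the right-hand side collapses to $h_{11ii}-2\sum_{l>\mu}h_{1li}^2/(\kappa_l-\kappa_1)$, yielding (ii).

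The main obstacle is the bookkeeping in the second-order step: one must differentiate a Rayleigh quotient in which both the base point and the trial vector move, and verify that $\tfrac{d^2}{dt^2}h_{11}(x(t))$ reproduces the intrinsic second covariant derivative $h_{11ii}$. The parallel-transport choice of frame and the use of a geodesic curve are precisely what kill the connection and acceleration terms, reducing the identity to the clean formula above.
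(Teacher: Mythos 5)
Your proof is correct, and it follows essentially the same Rayleigh-quotient argument as the cited source \cite{b-c-d}: comparing the smooth barrier $\varphi$ to the smooth function $h(\omega,\omega)/|\omega|^2$ for a family of trial directions $\omega$, taking first-order conditions (with polarization) for (i), and computing the second derivative along a geodesic with an affinely rotating trial direction, then optimizing over the rotation parameters $b_l$, for (ii). The key bookkeeping points you flag — that the parallel frame kills connection terms at $\bar x$ and that the geodesic parametrization kills the acceleration term, so $\tfrac{d^2}{dt^2}h_{11}(x(t))\big|_{t=0}=h_{11ii}(\bar x)$ — are handled correctly.
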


Now define a smooth function $\varphi$ by $\frac{F}{\varphi}-\frac{\beta-1}{\beta}\Phi=\max_{x\in M}W(x)$ on $M$. If $W$ attains its maximum at $\bar{x}$, then we know $\varphi\leq \kappa_1$ everywhere and $\varphi(\bar{x})=\kappa_1(\bar{x})$.

Using Lemma \ref{bcd} at $\bar{x}$ and $(i)$ in Corollary \ref{ccscmp}, we have 
\begin{equation}\label{Lphi}
\begin{aligned}
\LF \varphi&\leq \LF h_{11}-2\frac{\partial F}{\partial \kappa_{i}}\sum_{l>\mu}(\kappa_{l}-\kappa_{1})^{-1}h_{1li}^{2}\\
&=\bar{g}(\lambda\partial_{r},\nabla h_{11})+\lambda'\kappa_{1}+\mathcal{F}\kappa_{1}^{2}-\kappa_{1}\frac{\partial F}{\partial h_{ij}}h_{mj}h_{mi}+\kappa_{1}^{2}(\beta-1)F-\frac{\partial^{2} F}{\partial h_{ij}\partial h_{st}}h_{ij1}h_{st1}\\
&~-2\frac{\partial F}{\partial \kappa_{i}}\sum_{l>\mu}(\kappa_{l}-\kappa_{1})^{-1}h_{1li}^{2}-\epsilon\beta F+\epsilon \kappa_{1}\sum_{i}\frac{\partial F}{\partial h_{ii}}.
\end{aligned}
\end{equation}

\begin{lemma}\label{wmax}
	Let $M$ be a strictly convex hypersurface in the hemisphere $\mathbb{S}^{n+1}_{+}$ with $n\geq 2$ satisfying \eqref{meq}. For $\beta\geq 1$ and $\mathcal{F}\geq 0$, if $F$ satisfies Condition \ref{condtn} and $\bar{x}$ is a maximum point of $W$, then $\bar{x}$ must be umbilic and $\nabla F(\bar{x})=0$.
\end{lemma}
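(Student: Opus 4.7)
The plan is the two-step Calabi-trick maximum principle from \cite{c-d,b-c-d,GaoLiMa}; the new feature is that the $\epsilon=1$ correction in Corollary \ref{ccscmp}(i) is what allows the borderline exponent $\beta=1$ to be handled on the hemisphere. I would work with the smooth $\varphi$ introduced just before the lemma, namely $\varphi=F/\bigl(\max_{x\in M}W(x)+\tfrac{\beta-1}{\beta}\Phi\bigr)$, which is smooth near $\bar x$, satisfies $\varphi\le\kappa_1$ with equality at $\bar x$, and by construction makes $\tilde W := F/\varphi - \tfrac{\beta-1}{\beta}\Phi$ identically equal to $\max_{x\in M} W(x)$. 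Hence $\nabla\tilde W\equiv 0$ and $\LF\tilde W\equiv 0$ near $\bar x$, two identities that will drive the proof. Expanding $\LF\tilde W=0$ via the quotient rule, substituting $\LF F$ and $\LF\Phi$ from Proposition \ref{beqn}(1),(4), and replacing the unknown $\LF\varphi$ by its upper bound $\LF h_{11}-2\tfrac{\partial F}{\partial\kappa_i}\sum_{l>\mu}(\kappa_l-\kappa_1)^{-1}h_{1li}^2$ from Lemma \ref{bcd}(ii) combined with Corollary \ref{ccscmp}(i), turns the identity into an inequality $0\ge(\cdots)$ at $\bar x$ that includes a positive ``Calabi remainder''. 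In a frame diagonalizing $h$ at $\bar x$, the first-order identity $\nabla\tilde W\equiv 0$ together with $\nabla_iF=\kappa_i\bar{g}(\lambda\partial_r,e_i)$ and Lemma \ref{bcd}(i) gives the crucial substitution
\[
Fh_{11i}=\kappa_1\Bigl(\kappa_i-\tfrac{\beta-1}{\beta}\kappa_1\Bigr)\bar{g}(\lambda\partial_r,e_i),
\]
which kills the convection terms $\bar{g}(\lambda\partial_r,\nabla\cdot)$ and splits the residue into four blocks: a $\lambda'$-block, an $\mathcal{F}$-block, an $\epsilon$-block and a pure-second-derivative block.

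The three linear blocks reduce, by the homogeneity $\sum_i\kappa_i\partial F/\partial\kappa_i=\beta F$, to
\[
\tfrac{\lambda'(\beta-1)}{\beta\kappa_1}\sum_i\tfrac{\partial F}{\partial\kappa_i}(\kappa_i-\kappa_1),\qquad \tfrac{\mathcal{F}}{\kappa_1}\sum_i\tfrac{\partial F}{\partial\kappa_i}\kappa_i(\kappa_i-\kappa_1),\qquad \tfrac{F}{\kappa_1^2}\sum_i\tfrac{\partial F}{\partial\kappa_i}(\kappa_i-\kappa_1),
\]
respectively; each is $\ge 0$ under $\beta\ge 1$, $\mathcal{F}\ge 0$, $\lambda'>0$ on the hemisphere and by the minimality of $\kappa_1$, and the $\epsilon$-block is strictly positive unless $\kappa_1=\cdots=\kappa_n$. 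The pure-second-derivative block collects $\tfrac{F}{\kappa_1^2}\tfrac{\partial^2F}{\partial h_{ij}\partial h_{st}}h_{ij1}h_{st1}$, the cross terms $-\tfrac{2}{\kappa_1^2}\tfrac{\partial F}{\partial h_{ij}}\nabla_i F\nabla_j\varphi$, the quadratic $\tfrac{2F}{\kappa_1^3}\tfrac{\partial F}{\partial h_{ij}}\nabla_i\varphi\nabla_j\varphi$, and the Calabi remainder; after substituting $Fh_{11i}$ and using the diagonal-frame expansion $\tfrac{\partial^2F}{\partial h_{ij}\partial h_{st}}h_{ij1}h_{st1}=\sum_{i,j}\tfrac{\partial^2F}{\partial\kappa_i\partial\kappa_j}h_{ii1}h_{jj1}+2\sum_{i>j}\tfrac{\partial F/\partial\kappa_i-\partial F/\partial\kappa_j}{\kappa_i-\kappa_j}h_{ij1}^2$, it should reassemble (together with the Calabi remainder) into the quadratic form of \eqref{keyineq} plus the Codazzi-symmetry piece handled by Condition \ref{condtn}(iii), and hence be $\ge 0$ by Condition \ref{condtn}(iv). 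Since all four blocks are $\ge 0$ and their sum is $\le 0$, each vanishes.

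Vanishing of the $\epsilon$-block forces $\kappa_1=\cdots=\kappa_n$ at $\bar x$, i.e.\ umbilicity. With $\mu=n$, Codazzi symmetry in constant curvature combined with Lemma \ref{bcd}(i) — namely $h_{iij}=h_{jji}=h_{iji}=\nabla_i\varphi\,\delta_{ij}$ — forces $\nabla_j\varphi=0$ for every $j$ (using $n\ge 2$), so $h_{11i}(\bar x)=0$; plugging back into the first-order identity gives $\tfrac{\kappa_1^2}{\beta}\bar{g}(\lambda\partial_r,e_i)=0$, hence $\bar{g}(\lambda\partial_r,e_i)=0$ for every $i$, and therefore $\nabla F(\bar x)=h_{ij}\bar{g}(\lambda\partial_r,e_j)=0$. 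The main obstacle is the algebraic reassembly of the pure-second-derivative block into \eqref{keyineq}: the substitution $Fh_{11i}=\kappa_1 a_i\bar{g}(\lambda\partial_r,e_i)$ with $a_i=\kappa_i-\tfrac{\beta-1}{\beta}\kappa_1$ is what produces the weight $1/\kappa_i$ on $y_i^2$ demanded by Condition \ref{condtn}(iv). The subtlety distinguishing the hemisphere from the Euclidean case of \cite{GaoLiMa} is that when $\beta=1$ the $\lambda'$-block vanishes, so only the positive sphere block ($\epsilon=1$) can force umbilicity — this is exactly what the warped-product geometry of $\mathbb{S}^{n+1}_+$ supplies.
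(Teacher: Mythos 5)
Your proposal follows essentially the same strategy as the paper's proof: expand $\LF(F/\varphi-\tfrac{\beta-1}{\beta}\Phi)$ at $\bar x$ with the Calabi upper bound for $\LF\varphi$, use the first-order condition to convert $h_{11i}$ into a multiple of $\bar g(\lambda\partial_r,e_i)$, split the residual into the $\lambda'$-, $\mathcal F$-, $\epsilon$- and second-derivative blocks, and conclude umbilicity from the sign of the $\epsilon$-block. Your decomposition is exactly the paper's $J_1$ (your first three blocks) and $J_2+J_3$ (your fourth block), and you correctly identify Conditions \ref{condtn}(iii) and (iv) as the tools that make $J_2+J_3\ge 0$, although that estimate — the technical core of the paper's argument — is asserted rather than carried out.

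One genuine (and nice) local variation: to get $\nabla F(\bar x)=0$ you invoke Lemma \ref{bcd}(i) with $\mu=n$ together with full Codazzi symmetry to deduce $\nabla\varphi(\bar x)=0$, hence $h_{11i}(\bar x)=0$, and then read off $\bar g(\lambda\partial_r,e_i)=0$ from the first-order identity. The paper instead obtains $\nabla_iF=0$ for $1<i\le\mu$ from \eqref{i>1} and then kills $\nabla_1F$ by observing that its coefficient $\tfrac{2\beta-1}{\beta}\kappa_1^{-2}\bigl(1-\tfrac{1}{\beta}\tfrac{\partial\log F}{\partial\kappa_1}\kappa_1\bigr)=\tfrac{2\beta-1}{\beta}\kappa_1^{-2}\tfrac{n-1}{n}$ in the lower bound for $J_2+J_3$ is strictly positive for $n\ge 2$. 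Both routes are correct; yours is arguably more structural and avoids the quantitative coefficient computation. A small slip to fix: the Codazzi chain should read $h_{iij}=h_{iji}=\nabla_i\varphi\,\delta_{ij}$, not $h_{iij}=h_{jji}$; for $i\ne j$ this still gives $\nabla_j\varphi=0$, which is what you need.
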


\begin{proof}

	At $\bar{x}$, we have
	\begin{equation}\label{nablaw}
	0=\nabla_{i}(\frac{F}{\varphi}-\frac{\beta-1}{\beta}\Phi)
	\end{equation}
	for $1\leq i\leq n$.
	 And, using \eqref{Lphi}, we obtain
	\begin{equation}\label{2deriw}
	\begin{aligned}
	&0=\LF(\frac{F}{\varphi}-\frac{\beta-1}{\beta}\Phi)\\
	&\geq \bar{g}(\lambda\partial_{r},\nabla(\frac{F}{\varphi}))+2\frac{\partial F}{\partial h_{ij}}\nabla_{i}F\nabla_{j}\frac{1}{\varphi}+2F\kappa_{1}^{-3}\frac{\partial F}{\partial \kappa_{i}}h_{11i}^{2}\\
	&~+F\kappa_{1}^{-2}\frac{\partial^{2} F}{\partial h_{ij}\partial h_{st}}h_{ij1}h_{st1}+2F\kappa_{1}^{-2}\frac{\partial F}{\partial \kappa_{i}}\sum_{l>\mu}(\kappa_{l}-\kappa_{1})^{-1}h_{1li}^{2}\\
	&~+\frac{\beta-1}{\beta}\lambda'\frac{\partial F}{\partial\kappa_{i}}(\frac{\kappa_{i}}{\kappa_{1}}-1)+\mathcal{F}\frac{\partial F}{\partial \kappa_{i}}\kappa_{i}(\frac{\kappa_{i}}{\kappa_{1}}-1)+\epsilon F\kappa_{1}^{-1}\frac{\partial F}{\partial \kappa_{i}}(\frac{\kappa_{i}}{\kappa_{1}}-1).
	\end{aligned}
	\end{equation}
	
	For convenience, let us denote
	\begin{align}\label{eq:J1}
	J_{1}=\frac{\beta-1}{\beta}\lambda'\frac{\partial F}{\partial\kappa_{i}}(\frac{\kappa_{i}}{\kappa_{1}}-1)+\mathcal{F}\frac{\partial F}{\partial \kappa_{i}}\kappa_{i}(\frac{\kappa_{i}}{\kappa_{1}}-1)+\epsilon F\kappa_{1}^{-1}\frac{\partial F}{\partial \kappa_{i}}(\frac{\kappa_{i}}{\kappa_{1}}-1),
	\end{align}
	
	\begin{align*}
	J_{2}=\bar{g}(\lambda\partial_{r},\nabla(\frac{F}{\varphi}))+2\frac{\partial F}{\partial h_{ij}}\nabla_{i}F\nabla_{j}\frac{1}{\varphi}+2F\kappa_{1}^{-3}\frac{\partial F}{\partial \kappa_{i}}h_{11i}^{2}
	\end{align*}
	and
	\begin{align*}
	J_{3}=F\kappa_{1}^{-2}\frac{\partial^{2} F}{\partial h_{ij}\partial h_{st}}h_{ij1}h_{st1}+2F\kappa_{1}^{-2}\frac{\partial F}{\partial \kappa_{i}}\sum_{l>\mu}(\kappa_{l}-\kappa_{1})^{-1}h_{1li}^{2}.
	\end{align*}
	
	Using $\nabla_{i}F=\kappa_{i}\bar{g}(\lambda\partial_{r},e_{i})$, $\nabla_{i}\Phi=\bar{g}(\lambda\partial_{r},e_{i})$ and \eqref{nablaw}, we have
	\begin{equation}\label{byderi0}
	\begin{aligned}
	&\quad \bar{g}(\lambda\partial_{r},\nabla(\frac{F}{\varphi}))+2\frac{\partial F}{\partial h_{ij}}\nabla_{i}F\nabla_{j}\frac{1}{\varphi}\\
	&=\frac{\beta-1}{\beta}\bar{g}(\lambda\partial_{r},\nabla\Phi)+2F^{-1}\frac{\partial F}{\partial h_{ij}}\nabla_{i}F\nabla_{j}\frac{F}{\varphi}-\frac{2}{\varphi}F^{-1}\frac{\partial F}{\partial h_{ij}}\nabla_{i}F\nabla_{j}F\\
	&=\frac{\beta-1}{\beta}\kappa_{i}^{-2}(\nabla_{i}F)^{2}+\frac{2(\beta-1)}{\beta}\kappa_{i}^{-1}\frac{\partial\log F}{\partial\kappa_{i}}(\nabla_{i}F)^{2}-2\kappa_{1}^{-1}\frac{\partial\log F}{\partial\kappa_{i}}(\nabla_{i}F)^{2}.
	\end{aligned}
	\end{equation}

	From \eqref{nablaw} we know
	\begin{equation}\label{h11i}
	\kappa_{1}^{-1}\nabla_{i}F-F\kappa_{1}^{-2}h_{11i}=\frac{\beta-1}{\beta}\nabla_{i}\Phi=\frac{\beta-1}{\beta}\kappa_{i}^{-1}\nabla_{i}F.
	\end{equation}
	
	Therefore
	\begin{equation}\label{h11i2}
	2F\kappa_{1}^{-3}\frac{\partial F}{\partial \kappa_{i}}h_{11i}^{2}
	=2\kappa_{1}(\kappa_{1}^{-1}-\frac{\beta-1}{\beta}\kappa_{i}^{-1})^{2}\frac{\partial\log F}{\partial \kappa_{i}}(\nabla_{i}F)^{2}
	\end{equation}
	and by Lemma \ref{bcd} i)
	\begin{equation}\label{i>1}
	\nabla_{i}F=0,\quad \text{for } 1<i\leq \mu.
	\end{equation}
	
	From \eqref{byderi0} and \eqref{h11i2}, we have
	\begin{align*}
	J_{2}=\frac{\beta-1}{\beta}\kappa_{i}^{-2}(\nabla_{i}F)^{2}-\frac{2(\beta-1)}{\beta}\kappa_{i}^{-1}\frac{\partial\log F}{\partial\kappa_{i}}(\nabla_{i}F)^{2}+2\frac{(\beta-1)^{2}}{\beta^{2}}\kappa_{1}\kappa_{i}^{-2}\frac{\partial\log F}{\partial \kappa_{i}}(\nabla_{i}F)^{2}.
	\end{align*}
	
	By Lemma \ref{bcd} i) we also know
	\begin{align*}
	\frac{\partial^{2} F}{\partial h_{ij}\partial h_{st}}h_{ij1}h_{st1}
	&=\frac{\partial^{2}F}{\partial \kappa_{i}\partial \kappa_{j}}h_{ii1}h_{jj1}+2\sum_{i>j}(\kappa_{i}-\kappa_{j})^{-1}(\frac{\partial F}{\partial \kappa_{i}}-\frac{\partial F}{\partial \kappa_{j}})h_{ij1}^{2}\\
	&=\frac{\partial^{2}F}{\partial \kappa_{i}\partial \kappa_{j}}h_{ii1}h_{jj1}+2\sum_{i>\mu}(\kappa_{i}-\kappa_{1})^{-1}(\frac{\partial F}{\partial \kappa_{i}}-\frac{\partial F}{\partial \kappa_{1}})h_{11i}^{2}\\
	&~+2\sum_{i>j>\mu}(\kappa_{i}-\kappa_{j})^{-1}(\frac{\partial F}{\partial \kappa_{i}}-\frac{\partial F}{\partial \kappa_{j}})h_{ij1}^{2}
	\end{align*}
	and
	\begin{align*}
	2\frac{\partial F}{\partial \kappa_{i}}\sum_{l>\mu}(\kappa_{l}-\kappa_{1})^{-1}h_{1li}^{2}
	&=2\frac{\partial F}{\partial \kappa_{1}}\sum_{l>\mu}(\kappa_{l}-\kappa_{1})^{-1}h_{11l}^{2}+2\sum_{i>\mu}\frac{\partial F}{\partial \kappa_{i}}(\kappa_{i}-\kappa_{1})^{-1}h_{1ii}^{2}\\
	&~+2\sum_{i>l>\mu}\frac{\partial F}{\partial \kappa_{i}}(\kappa_{l}-\kappa_{1})^{-1}h_{1li}^{2}+2\sum_{l>i>\mu}\frac{\partial F}{\partial \kappa_{i}}(\kappa_{l}-\kappa_{1})^{-1}h_{1li}^{2}.
	\end{align*}
	
	And
	\begin{align*}
	&\quad 2\sum_{i>j>\mu}(\kappa_{i}-\kappa_{j})^{-1}(\frac{\partial F}{\partial \kappa_{i}}-\frac{\partial F}{\partial \kappa_{j}})h_{ij1}^{2}+2\sum_{i>l>\mu}\frac{\partial F}{\partial \kappa_{i}}(\kappa_{l}-\kappa_{1})^{-1}h_{1li}^{2}+2\sum_{l>i>\mu}\frac{\partial F}{\partial \kappa_{i}}(\kappa_{l}-\kappa_{1})^{-1}h_{1li}^{2}\\
	&\geq 2\sum_{i>j>\mu}(\kappa_{i}-\kappa_{j})^{-1}(\frac{\partial F}{\partial \kappa_{i}}-\frac{\partial F}{\partial \kappa_{j}})h_{ij1}^{2}+2\sum_{i>l>\mu}\frac{\partial F}{\partial \kappa_{i}}\kappa_{l}^{-1}h_{1li}^{2}+2\sum_{l>i>\mu}\frac{\partial F}{\partial \kappa_{i}}\kappa_{l}^{-1}h_{1li}^{2}\\
	&=2\sum_{i>j>\mu}\kappa_{i}^{-1}\kappa_{j}^{-1}(\kappa_{i}-\kappa_{j})^{-1}(\frac{\partial F}{\partial \kappa_{i}}\kappa_{i}^{2}-\frac{\partial F}{\partial \kappa_{j}}\kappa_{j}^{2})h_{ij1}^{2}\geq 0,
	\end{align*}
	where the last inequality is due to Condition \ref{condtn} iii).
	
	Now we have
	\begin{equation}\label{2derif}
	\begin{aligned}
	J_{3}&\geq F\kappa_{1}^{-2}\frac{\partial^{2}F}{\partial \kappa_{i}\partial \kappa_{j}}h_{ii1}h_{jj1}+2F\kappa_{1}^{-2}\sum_{i>\mu}(\kappa_{i}-\kappa_{1})^{-1}\frac{\partial F}{\partial \kappa_{i}}h_{11i}^{2}+2F\kappa_{1}^{-2}\sum_{i>\mu}\frac{\partial F}{\partial \kappa_{i}}(\kappa_{i}-\kappa_{1})^{-1}h_{1ii}^{2}\\
	&\geq -F\kappa_{1}^{-2}\kappa_{i}^{-1}\frac{\partial F}{\partial\kappa_{i}}h_{ii1}^{2}+\kappa_{1}^{-2}(\nabla_{1}F)^{2}+2\kappa_{1}^{2}\sum_{i>\mu}(\kappa_{i}-\kappa_{1})^{-1}(\kappa_{1}^{-1}-\frac{\beta-1}{\beta}\kappa_{i}^{-1})^{2}\frac{\partial\log F}{\partial \kappa_{i}}(\nabla_{i}F)^{2}\\
	&~+2F\kappa_{1}^{-2}\sum_{i>\mu}\frac{\partial F}{\partial \kappa_{i}}(\kappa_{i}-\kappa_{1})^{-1}h_{1ii}^{2}\\
	&\geq -\frac{1}{\beta^{2}}\kappa_{1}^{-1}\frac{\partial\log F}{\partial\kappa_{1}}(\nabla_{1}F)^{2}+ \kappa_{1}^{-2}(\nabla_{1}F)^{2}+2\kappa_{1}^{2}\sum_{i>\mu}(\kappa_{i}-\kappa_{1})^{-1}(\kappa_{1}^{-1}-\frac{\beta-1}{\beta}\kappa_{i}^{-1})^{2}\frac{\partial\log F}{\partial \kappa_{i}}(\nabla_{i}F)^{2}
	\end{aligned}
	\end{equation}
	
	where the second inequality is from Condition \ref{condtn} iv) and \eqref{h11i2}, the last inequality is from
	\begin{align*}
	-F\kappa_{1}^{-2}\kappa_{i}^{-1}\frac{\partial F}{\partial\kappa_{i}}h_{ii1}^{2}+2F\kappa_{1}^{-2}\sum_{i>\mu}\frac{\partial F}{\partial \kappa_{i}}(\kappa_{i}-\kappa_{1})^{-1}h_{1ii}^{2}\geq -F\kappa_{1}^{-3}\frac{\partial F}{\partial\kappa_{1}}h_{111}^{2}
	\end{align*}
	and
	\begin{align*}
	-F\kappa_{1}^{-3}\frac{\partial F}{\partial\kappa_{1}}h_{111}^{2}=-\frac{1}{\beta^{2}}\kappa_{1}^{-1}\frac{\partial\log F}{\partial\kappa_{1}}(\nabla_{1}F)^{2}.
	\end{align*}
	
	Using \eqref{i>1}, we have
	\begin{align*}
	J_{2}+J_{3}&\geq \frac{\beta-1}{\beta}\kappa_{i}^{-2}(\nabla_{i}F)^{2}-\frac{2(\beta-1)}{\beta}\kappa_{i}^{-1}\frac{\partial\log F}{\partial\kappa_{i}}(\nabla_{i}F)^{2}+2\frac{(\beta-1)^{2}}{\beta^{2}}\kappa_{1}\kappa_{i}^{-2}\frac{\partial\log F}{\partial \kappa_{i}}(\nabla_{i}F)^{2}\\
	&~-\frac{1}{\beta^{2}}\kappa_{1}^{-1}\frac{\partial\log F}{\partial\kappa_{1}}(\nabla_{1}F)^{2}+ \kappa_{1}^{-2}(\nabla_{1}F)^{2}+2\kappa_{1}^{2}\sum_{i>\mu}(\kappa_{i}-\kappa_{1})^{-1}(\kappa_{1}^{-1}-\frac{\beta-1}{\beta}\kappa_{i}^{-1})^{2}\frac{\partial\log F}{\partial \kappa_{i}}(\nabla_{i}F)^{2}\\
	&=\sum_{i>\mu}\left(\frac{\beta-1}{\beta}\kappa_{i}^{-2}+\frac{2}{\beta}\kappa_{1}(\kappa_{i}-\kappa_{1})^{-1}(\kappa_{1}^{-1}-\frac{\beta-1}{\beta}\kappa_{i}^{-1})\frac{\partial\log F}{\partial \kappa_{i}}\right)(\nabla_{i}F)^{2}\\
	&~+\frac{2\beta-1}{\beta}\kappa_{1}^{-2}\left(1-\frac{1}{\beta}\frac{\partial\log F}{\partial\kappa_{1}}\kappa_{1}\right)(\nabla_{1}F)^{2}\geq 0.
	\end{align*}
	
	And combining \eqref{2deriw}, we obtain
	\begin{align*}
	0&\geq J_{1}.
	\end{align*}
	
	On the other hand, since $\lambda'(r)=\cos r\geq 0$ and $\epsilon=1$ for the hemishpere $\mathbb{S}^{n+1}_{+}$, by $\mathcal{F}\geq 0$, $\beta\geq 1$ and $\frac{\kappa_{i}}{\kappa_{1}}\geq 1$, we know $J_{1}\geq 0$. Thus, $J_{1}=0=J_2+J_3$, which implies $\kappa_{1}=...=\kappa_{n}$ and $\nabla F=0$ at $\bar{x}$.
	
\end{proof}

%%%%%%%%%%%%%%%%%%%%%%%%%%%%%%%%%%%%
\section{Proof of Theorem \ref{thms} when $\beta>1$}
\label{sec:beta>1}

Coming back to the test function $Z=F\tr b-\frac{n(\beta -1)}{\beta} \Phi$ and considering $\LF Z$ according to $(ii)$ in Corollary \ref{ccscmp}, we have
\begin{align*}
&\quad 2\frac{\partial F}{\partial h_{ij}}\nabla_{i}F\nabla_{j}\tr b+\bar{g}(\lambda\partial_{r},\nabla (F\tr b))\\
&=2F^{-1}\frac{\partial F}{\partial h_{ij}}\nabla_{i}F\nabla_{j}(F\tr b)-2F^{-1}\tr b\frac{\partial F}{\partial h_{ij}}\nabla_{i}F\nabla_{j}F+\bar{g}(\lambda\partial_{r},\nabla (F\tr b))\\
&=2F^{-1}\frac{\partial F}{\partial h_{ij}}\nabla_{i}F\nabla_{j}Z+\frac{2n(\beta-1)}{\beta}F^{-1}\frac{\partial F}{\partial h_{ij}}\nabla_{i}F\nabla_{j}\Phi-2F^{-1}\tr b\frac{\partial F}{\partial h_{ij}}\nabla_{i}F\nabla_{j}F\\
&~+\bar{g}(\lambda\partial_{r},\nabla Z)+\frac{n(\beta-1)}{\beta}\bar{g}(\lambda\partial_{r},\nabla \Phi).
\end{align*}
Using $\nabla_{i}\Phi=\kappa_{i}^{-1}\nabla_{i}F$ which follows from $\nabla_{i}\Phi=\bar{g}(\lambda\partial_{r},e_{i})$ and $\nabla_{i}F=\kappa_{i}\bar{g}(\lambda\partial_{r},e_{i})$, we get
\begin{equation}\label{gradterms}
\begin{aligned}
&\quad 2\frac{\partial F}{\partial h_{ij}}\nabla_{i}F\nabla_{j}\tr b+\bar{g}(\lambda\partial_{r},\nabla (F\tr b))\\
&=R(\nabla Z)+\frac{2n(\beta-1)}{\beta}\kappa_{i}^{-1}\frac{\partial\log F}{\partial\kappa_{i}}(\nabla_{i}F)^{2}-2\tr b\frac{\partial\log F}{\partial\kappa_{i}}(\nabla_{i}F)^{2}\\
&~+\frac{n(\beta-1)}{\beta}\kappa_{i}^{-2}(\nabla_{i}F)^{2},
\end{aligned}
\end{equation}
where $R(\nabla Z)$ denotes the terms including $\nabla Z$.

Using Condition \ref{condtn} iv), we know
\begin{align*}
&\quad Fb^{lp}b^{ql}\frac{\partial^{2} F}{\partial h_{ij}\partial h_{st}}h_{ijp}h_{stq}\\
&=F\kappa_{p}^{-2}\frac{\partial^{2}F}{\partial \kappa_{i}\partial \kappa_{j}}h_{iip}h_{jjp}+F\kappa_{p}^{-2}\sum_{i\neq j}(\frac{\partial F}{\partial \kappa_{i}}-\frac{\partial F}{\partial \kappa_{j}})(\kappa_{i}-\kappa_{j})^{-1}h_{ijp}^{2}\\
&\geq -F\kappa_{p}^{-2}\kappa_{i}^{-1}\frac{\partial F}{\partial \kappa_{i}}h_{iip}^{2}+\kappa_{p}^{-2}(\nabla_{p}F)^{2}+F\kappa_{p}^{-2}\sum_{i\neq j}(\frac{\partial F}{\partial \kappa_{i}}-\frac{\partial F}{\partial \kappa_{j}})(\kappa_{i}-\kappa_{j})^{-1}h_{ijp}^{2}.
\end{align*}

Then combining
\begin{align*}
&\quad  2 F b^{ks}b^{pt}b^{kq}\frac{\partial F}{\partial h_{ij}}h_{sti}h_{pqj}=2F\frac{\partial F}{\partial \kappa_{i}}\kappa_{p}^{-2}\kappa_{q}^{-1}h_{pqi}^{2}\\
&=2F\frac{\partial F}{\partial \kappa_{i}}\kappa_{p}^{-2}\kappa_{i}^{-1}h_{pii}^{2}+2F\sum_{q\neq i}\frac{\partial F}{\partial \kappa_{i}}\kappa_{p}^{-2}\kappa_{q}^{-1}h_{pqi}^{2},
\end{align*}
we get
\begin{equation}\label{inv-concv}
\begin{aligned}
&\quad F b^{lp}b^{ql}\frac{\partial^{2} F}{\partial h_{ij}\partial h_{st}}h_{ijp}h_{stq}+2 Fb^{ks}b^{pt}b^{kq}\frac{\partial F}{\partial h_{ij}}h_{sti}h_{pqj}\\
&\geq F\kappa_{p}^{-2}\kappa_{i}^{-1}\frac{\partial F}{\partial \kappa_{i}}h_{iip}^{2}+\kappa_{p}^{-2}(\nabla_{p}F)^{2}+F\kappa_{p}^{-2}\sum_{i\neq j}(\frac{\partial F}{\partial \kappa_{i}}-\frac{\partial F}{\partial \kappa_{j}})(\kappa_{i}-\kappa_{j})^{-1}h_{ijp}^{2}\\
&~+2F\sum_{q\neq i}\frac{\partial F}{\partial \kappa_{i}}\kappa_{p}^{-2}\kappa_{q}^{-1}h_{pqi}^{2}.
\end{aligned}
\end{equation}

By Condition \ref{condtn} iii), we have
\begin{equation}\label{cond3}
\begin{aligned}
&\quad F\kappa_{p}^{-2}\sum_{i\neq j}(\frac{\partial F}{\partial \kappa_{i}}-\frac{\partial F}{\partial \kappa_{j}})(\kappa_{i}-\kappa_{j})^{-1}h_{ijp}^{2}+2F\sum_{q\neq i}\frac{\partial F}{\partial \kappa_{i}}\kappa_{p}^{-2}\kappa_{q}^{-1}h_{pqi}^{2}\\
&=F\kappa_{p}^{-2}\sum_{i\neq j}(\frac{\partial F}{\partial \kappa_{i}}\kappa_{i}^{2}-\frac{\partial F}{\partial \kappa_{j}}\kappa_{j}^{2})\kappa_{i}^{-1}\kappa_{j}^{-1}(\kappa_{i}-\kappa_{j})^{-1}h_{ijp}^{2}\geq 0.
\end{aligned}
\end{equation}

According to the Cauchy-Schwartz inequality and $\sum_{i}\frac{\partial F}{\partial\kappa_{i}}\kappa_{i}=\beta F$, we have
\begin{equation}\label{CSineq}
\begin{aligned}
F\kappa_{p}^{-2}\kappa_{i}^{-1}\frac{\partial F}{\partial \kappa_{i}}h_{iip}^{2}\geq \frac{1}{\beta}\kappa_{p}^{-2}(\nabla_{p}F)^{2}.
\end{aligned}
\end{equation}

Combining \eqref{inv-concv}, \eqref{cond3} and \eqref{CSineq},
we know
\begin{align}\label{3rdterm}
\quad F b^{lp}b^{ql}\frac{\partial^{2} F}{\partial h_{ij}\partial h_{st}}h_{ijp}h_{stq}+2F b^{ks}b^{pt}b^{kq}\frac{\partial F}{\partial h_{ij}}h_{sti}h_{pqj}
\geq \frac{\beta+1}{\beta}\kappa_{p}^{-2}(\nabla_{p}F)^{2}.
\end{align}

For convenience, let us denote 
\begin{equation}\label{eq:L1}
L_1=\epsilon F(\beta F\tr(b^{2})-\tr b\sum_{i}\frac{\partial F}{\partial h_{ii}})+(\beta-1)\lambda'(F\tr b-\frac{n}{\beta}\sum_{i}\frac{\partial F}{\partial h_{ii}})+\mathcal{F}(\tr b\frac{\partial F}{\partial h_{ij}}h_{il}h_{jl}-n\beta F)
\end{equation}
It follows from Condition \ref{condtn} iii) that
\begin{equation}\label{hterm}
\begin{aligned}
L_1&=\epsilon F\sum_{i>j}\frac{1}{\kappa_{i}^{2}\kappa_{j}^{2}}(\frac{\partial F}{\partial\kappa_{i}}\kappa_{i}^{2}-\frac{\partial F}{\partial\kappa_{j}}\kappa_{j}^{2})(\kappa_{i}-\kappa_{j})+\frac{(\beta-1)}{\beta}\lambda'\sum_{i>j}\frac{1}{\kappa_{i}\kappa_{j}}(\frac{\partial F}{\partial \kappa_{i}}\kappa_{i}-\frac{\partial F}{\partial \kappa_{j}}\kappa_{j})(\kappa_{i}-\kappa_{j})\\
&~+\mathcal{F}\sum_{i>j}\frac{1}{\kappa_{i}\kappa_{j}}\Big(\frac{\partial F}{\partial \kappa_{i}}\kappa_{i}^{2}-\frac{\partial F}{\partial \kappa_{j}}\kappa_{j}^{2}\Big)(\kappa_{i}-\kappa_{j})\geq 0,
\end{aligned}
\end{equation}
and the equality holds if and only if $\kappa_1=\cdots=\kappa_n$.
Thus adding \eqref{gradterms}, \eqref{3rdterm} and \eqref{hterm}, we obtain
\begin{align*}
\LF Z+R(\nabla Z)&\geq \frac{2n(\beta-1)}{\beta}\kappa_{i}^{-1}\frac{\partial\log F}{\partial\kappa_{i}}(\nabla_{i}F)^{2}-2\tr b\frac{\partial\log F}{\partial\kappa_{i}}(\nabla_{i}F)^{2}\\
&~+\frac{n(\beta-1)}{\beta}\kappa_{i}^{-2}(\nabla_{i}F)^{2}+\frac{\beta+1}{\beta}\kappa_{i}^{-2}(\nabla_{i}F)^{2}\\
&=\Big(2\frac{\partial\log F}{\partial \kappa_{i}}(n\kappa_{i}^{-1}-\tr b)-\frac{2n}{\beta}\kappa_{i}^{-1}\frac{\partial\log F}{\partial \kappa_{i}}+\frac{(n+1)\beta-n+1}{\beta}\kappa_{i}^{-2}\Big)(\nabla_{i}F)^{2}.
\end{align*}

By Lemma \ref{wmax}, we know that  any maximum point $\bar{x}$ of $W$ is an umbilic point on $M$. Thus at $\bar{x}$, we have
\begin{equation}\label{coef}
\begin{aligned}
&\quad 2\frac{\partial\log F}{\partial \kappa_{i}}(n\kappa_{i}^{-1}-\tr b)-\frac{2n}{\beta}\kappa_{i}^{-1}\frac{\partial\log F}{\partial \kappa_{i}}+\frac{(n+1)\beta-n+1}{\beta}\kappa_{i}^{-2}\\
&= \frac{(n-1)(\beta-1)}{\beta}\kappa_{i}^{-2}>0
\end{aligned}
\end{equation}
for any $1\leq i\leq n$.
Then there exists a neighborhood of $\bar{x}$, denoted by $U$, such that $\LF Z +R(\nabla Z)\geq 0$ in $U$.
Since $Z\leq nW\leq nW(\bar{x})=Z(\bar{x})$, $Z$ attains its maximum at $\bar{x}$. By the strong maximum principle, we know $Z=Z(\bar{x})$ is a constant in $U$, which implies  $W$ is also a constant in $U$. Hence the set of maximum points of $W$ is an open set. Due to the connectedness of $M$, $W$ is a constant on $M$. 

Then by Lemma \ref{wmax}, we know $\nabla F=0$ on $M$. From $\nabla_{i}F=\kappa_{i}\bar{g}(\lambda\partial_{r},e_{i})$ and $\kappa_{i}>0$, we know $\nu$ is parallel to $\partial_{r}$ at every point of $M$, which implies $M$ is a slice $\{r_{0}\}\times\mathbb{S}^{n}$. This completes the proof of Theorem \ref{thms} for $\beta>1$.

%%%%%%%%%%%%%%%%%%%%%%%%%%%
\section{Proof of Theorem \ref{thms} when $\beta=1$}
\label{sec:beta=1}
Notice that \eqref{coef} vanishes for $\beta=1$, we need a new approach. 
The proof is divided into two cases according to the dimension of $M$.

\subsection{For $n\geq 3$}
When $\beta=1$, from \eqref{gradterms}, \eqref{inv-concv} and \eqref{hterm} we get
\begin{align*}
\LF Z+R(\nabla Z)&\geq -2\tr b\frac{\partial\log F}{\partial\kappa_{i}}(\nabla_{i}F)^{2}
-F\kappa_{p}^{-2}\kappa_{i}^{-1}\frac{\partial F}{\partial \kappa_{i}}h_{iip}^{2}+\kappa_{p}^{-2}(\nabla_{p}F)^{2}\\
&~+F\kappa_{p}^{-2}\sum_{i\neq j}(\frac{\partial F}{\partial \kappa_{i}}-\frac{\partial F}{\partial \kappa_{j}})(\kappa_{i}-\kappa_{j})^{-1}h_{ijp}^{2}+2F\frac{\partial F}{\partial \kappa_{i}}\kappa_{p}^{-2}\kappa_{q}^{-1}h_{pqi}^{2}.
\end{align*}

Notice now our test function is $Z=F\tr b$. Using
\begin{align*}
-\kappa_{p}^{-2}h_{ppi}=\nabla_{j}\tr b=\tr b\nabla_{j}\log Z-\tr b\nabla_{j}\log F,
\end{align*}
we have
\begin{align*}
&\quad 2F^{2}\frac{\partial\log F}{\partial \kappa_{i}}\Big(-\tr b(\nabla_{i}\log F)^{2}+\kappa_{p}^{-2}\kappa_{q}^{-1}h_{pqi}^{2}\Big)\\
&=2F^{2}\frac{\partial\log F}{\partial \kappa_{i}}\Big(\sum_{p}\kappa_{p}^{-1}(\kappa_{p}^{-2}h_{ppi}^{2}-(\nabla_{i}\log F)^{2})+\sum_{p\neq q}\kappa_{p}^{-2}\kappa_{q}^{-1}h_{pqi}^{2}\Big)\\
&=2F^{2}\frac{\partial\log F}{\partial \kappa_{i}}\Big(\sum_{p}\kappa_{p}^{-1}(\kappa_{p}^{-1}h_{ppi}-\nabla_{i}\log F)^{2}+\sum_{p\neq q}\kappa_{p}^{-2}\kappa_{q}^{-1}h_{pqi}^{2}+R(\nabla Z)\Big).
\end{align*}

Moreover, we have
\begin{align*}
&\quad 2F^{2}\sum_{i}\sum_{p\neq q}\frac{\partial\log F}{\partial \kappa_{i}}\kappa_{p}^{-2}\kappa_{q}^{-1}h_{pqi}^{2}+F\sum_{p}\sum_{i\neq j}\kappa_{p}^{-2}(\frac{\partial F}{\partial \kappa_{i}}-\frac{\partial F}{\partial \kappa_{j}})(\kappa_{i}-\kappa_{j})^{-1}h_{ijp}^{2}\\
&=2F^{2}\sum_{i\neq p}\frac{\partial\log F}{\partial \kappa_{i}}\kappa_{p}^{-2}\kappa_{i}^{-1}h_{pii}^{2}+F\sum_{\neq}\frac{\frac{\partial F}{\partial \kappa_{i}}\kappa_{i}^{2}-\frac{\partial F}{\partial \kappa_{j}}\kappa_{j}^{2}}{\kappa_{i}\kappa_{j}(\kappa_{i}-\kappa_{j})}\kappa_{p}^{-2}h_{ijp}^{2}\\
&~+2F\sum_{i\neq p}\frac{\frac{\partial F}{\partial \kappa_{i}}\kappa_{i}-\frac{\partial F}{\partial \kappa_{p}}\kappa_{p}}{\kappa_{i}-\kappa_{p}}\kappa_{p}^{-2}\kappa_{i}^{-1}h_{ipp}^{2}\\
&\geq 2F^{2}\sum_{i\neq p}\frac{\partial\log F}{\partial \kappa_{i}}\kappa_{p}^{-2}\kappa_{i}^{-1}h_{pii}^{2},
\end{align*}
where $\sum_{\neq}$ denotes that the three summation indices are distinct. Thus we know
\begin{align*}
\LF Z+R(\nabla Z)&\geq 2F^{2}\frac{\partial\log F}{\partial \kappa_{i}}\kappa_{p}^{-1}(\kappa_{p}^{-1}h_{ppi}-\nabla_{i}\log F)^{2}-F\kappa_{p}^{-2}\kappa_{i}^{-1}\frac{\partial F}{\partial \kappa_{i}}h_{iip}^{2}\\
&~+\kappa_{p}^{-2}(\nabla_{p}F)^{2}+2F^{2}\sum_{i\neq p}\frac{\partial\log F}{\partial \kappa_{i}}\kappa_{p}^{-2}\kappa_{i}^{-1}h_{pii}^{2}\\
&= 2F^{2}\sum_{i\neq p}\frac{\partial\log F}{\partial \kappa_{i}}\kappa_{p}^{-1}(\kappa_{p}^{-1}h_{ppi}-\nabla_{i}\log F)^{2}+F\kappa_{p}^{-2}\kappa_{i}^{-1}\frac{\partial F}{\partial \kappa_{i}}h_{iip}^{2}\\
&~+\kappa_{p}^{-2}(\nabla_{p}F)^{2}+2F^{2}\frac{\partial\log F}{\partial \kappa_{i}}\kappa_{i}^{-1}(\nabla_{i}\log F)^{2}\\
&~-4F^{2}\frac{\partial\log F}{\partial \kappa_{i}}\kappa_{i}^{-2}h_{iii}\nabla_{i}\log F.
\end{align*}

Let $y_{p}=\frac{\partial\log F}{\partial \kappa_{p}}h_{ppi}$ and $t_{i}=\frac{\partial\log F}{\partial \kappa_{i}}\kappa_{i}$, then
\begin{align*}
\kappa_{p}^{-2}(\nabla_{p}F)^{2}+2F^{2}\frac{\partial\log F}{\partial \kappa_{i}}\kappa_{i}^{-1}(\nabla_{i}\log F)^{2}=F^{2}\kappa_{i}^{-2}(1+2t_{i})(\sum_{p}y_{p})^{2}
\end{align*}
and
\begin{align*}
&\quad F\kappa_{p}^{-2}\kappa_{i}^{-1}\frac{\partial F}{\partial \kappa_{i}}h_{iip}^{2}-4F^{2}\frac{\partial\log F}{\partial \kappa_{i}}\kappa_{i}^{-2}h_{iii}\nabla_{i}\log F\\
&=F^{2}\kappa_{i}^{-2}\sum_{p}(\kappa_{p}^{-1}\frac{\partial\log F}{\partial \kappa_{p}}h_{ppi}^{2}-4\frac{\partial\log F}{\partial \kappa_{i}}h_{iii}\frac{\partial\log F}{\partial \kappa_{p}}h_{ppi})\\
&=F^{2}\kappa_{i}^{-2}\sum_{p}(\frac{1}{t_{p}}y_{p}^{2}-4y_{i}y_{p}).
\end{align*}

Therefore
\begin{equation}\label{sump}
\begin{aligned}
\LF Z+R(\nabla Z)&\geq F^{2}\kappa_{i}^{-2}\left(\sum_{p}(\frac{1}{t_{p}}y_{p}^{2}-4y_{i}y_{p})+(1+2t_{i})(\sum_{p}y_{p})^{2}\right).
\end{aligned}
\end{equation}

Since $\sum_{i=1}^{n} t_{i}=1$ for $\beta=1$ and $t_{i}>0$ for any $i$, using Lagrangian multiplier technique, we have $\sum_{p}(\frac{1}{t_{p}}y_{p}^{2}-4y_{i}y_{p})\geq (1-8t_{i}+4t_{i}^{2})(\sum_{p}y_{p})^{2}$ (see Lemma 6.2 in \cite{GaoLiMa}). Thus, we have
\begin{align*}
\LF Z+R(\nabla Z)&\geq 2F^{2}\kappa_{i}^{-2}(1-3t_{i}+2t_{i}^{2})(\sum_{p}y_{p})^{2}.
\end{align*}

It follows from $\sum_{i=1}^{n} t_{i}=1$ that  $t_{i}=\frac{1}{n}$ at any umbilic point for each $i$. 
Thus 
\begin{equation}\label{t_i}
1-3t_{i}+2t_{i}^{2}=\frac{(n-1)(n-2)}{n^{2}}>0
\end{equation}
 at an umbilic point if $n\geq 3$. For $n\geq 3$, the rest of the proof is as same as the one for $\beta>1$ in Section \ref{sec:beta>1} by using Lemma \ref{wmax}.

\subsection{For $n=2$}

In the case of $n=2$, notice that $1-3t_{i}+2t_{i}^{2}=0$ in \eqref{t_i}. So instead of the above argument, we will  show directly that $\LF Z+R(\nabla Z)\geq 0$ at all point of $M$. We have known this holds at umbilic points from  \eqref{t_i}. Thus we assume $\kappa_{1}\neq \kappa_{2}$ below.

We know
\begin{align*}
\nabla_{i}\tr b=-\kappa_{1}^{-2}h_{11i}-\kappa_{2}^{-2}h_{22i}=-t_{1}^{-1}\kappa_{1}^{-1}y_{1}-t_{2}^{-1}\kappa_{2}^{-1}y_{2}
\end{align*}
and
\begin{align*}
-\tr b\nabla_{i}\log F=-(\kappa_{1}^{-1}+\kappa_{2}^{-1})(y_{1}+y_{2}).
\end{align*}
By
\begin{align*}
\nabla_{j}\tr b=R(\nabla Z)-\tr b\nabla_{j}\log F,
\end{align*}
we have
\begin{align*}
t_{1}^{-1}\kappa_{1}^{-1}y_{1}+t_{2}^{-1}\kappa_{2}^{-1}y_{2}=(\kappa_{1}^{-1}+\kappa_{2}^{-1})(y_{1}+y_{2})+R(\nabla Z).
\end{align*}

Multiplying $t_1t_2$ on both sides and using $t_{1}+t_{2}=1$, we see
\begin{align*}
t_{2}(t_{1}+t_{2})\kappa_{1}^{-1}y_{1}+t_{1}(t_{1}+t_{2})\kappa_{2}^{-1}y_{2}&=t_{1}t_{2}(\kappa_{1}^{-1}+\kappa_{2}^{-1})(y_{1}+y_{2})+R(\nabla Z).
\end{align*}
This implies
\begin{align*}
t_{2}(t_{2}\kappa_{1}^{-1}-t_{1}\kappa_{2}^{-1})y_{1}=t_{1}(t_{2}\kappa_{1}^{-1}-t_{1}\kappa_{2}^{-1})y_{2}+R(\nabla Z),
\end{align*}
which means 
\begin{equation}\label{n21}
t_{2}y_{1}=t_{1}y_{2}+R(\nabla Z),
\end{equation}
or equivalently
\begin{equation*}
y_{1}=t_{1}(y_{1}+y_{2})+R(\nabla Z).
\end{equation*}

From \eqref{sump}, we have
\begin{align*}
\LF Z+R(\nabla Z)&\geq F^{2}\kappa_{i}^{-2}\left(t_{1}^{-1}y_{1}^{2}+t_{2}^{-1}y_{2}^{2}-4y_{i}(y_{1}+y_{2})+(1+2t_{i})(y_{1}+y_{2})^{2}\right).
\end{align*}
By using \eqref{n21} for $i=1$, similarly for $i=2$ as well, we have
\begin{align*}
&\quad t_{1}^{-1}y_{1}^{2}+t_{2}^{-1}y_{2}^{2}-4y_{1}(y_{1}+y_{2})+(1+2t_{1})(y_{1}+y_{2})^{2}\\
&= t_{1}^{-1}y_{1}^{2}+t_{2}^{-1}y_{2}^{2}-4t_{1}^{-1}y_{1}^{2}+(1+2t_{1})t_{1}^{-2}y_{1}^{2}+R(\nabla Z)\\
&=t_{2}^{-1}y_{2}^{2}+(1-t_{1})t_{1}^{-2}y_{1}^{2}+R(\nabla Z)=t_{2}^{-1}y_{2}^{2}+t_{2}t_{1}^{-2}y_{1}^{2}+R(\nabla Z).
\end{align*}
Then we know
\begin{align*}
\LF Z+R(\nabla Z)&\geq 0.
\end{align*}
By the strong maximum principle, we know $Z$ is a constant. Then $\epsilon$-term vanishes which implies $M$ is totally umbilic. This also means that $W$ is a constant on $M$. As the discussion in Section \ref{sec:beta>1}, we finish the proof.

%%%%%%%%%%%%%%%%%%%%%%%%%%%%%%%%%
\section{Proof of Corollary \ref{thmsign} and Theorem \ref{thmh}}

\begin{proof}[Proof of Corollary \ref{thmsign}]
	From Theorem \ref{thms}, we know Corollary \ref{thmsign} is established for $\alpha\geq \frac{1}{n}$. When $\frac{1}{n+2}\leq \alpha<\frac{1}{n}$, it can be proven in a similar way as the Euclidean case (Refer to \cite{b-c-d} for $\mathcal{F}=0$ and \cite{GaoLiMa} for $\mathcal{F}>0$). Compared to the Euclidean space, the only different terms in equation $(ii)$ of Corollary \ref{ccscmp} for $\LF Z$ are 
	\begin{align*}
	(\beta-1)\lambda'(F\tr b-\frac{n}{\beta}\sum_{i}\frac{\partial F}{\partial h_{ii}})+ \epsilon F(\beta F\tr(b^{2})-\tr b\sum_{i}\frac{\partial F}{\partial h_{ii}}).
	\end{align*}
	
Observe that
	\begin{align*}
	F\tr b-\frac{n}{\beta}\sum_{i}\frac{\partial F}{\partial h_{ii}}=0
	\end{align*}
	for $F=\sigma_{n}^{\alpha}$. And by Lemma \ref{newp}, we know the $\epsilon$-terms with $\epsilon=1$ is nonnegative. Using the similar argument of the Euclidean case (see \cite{b-c-d,GaoLiMa}), we know $Z$ is a constant. Thus the $\epsilon$-term must vanishes, which implies $M$ is totally umbilic by Lemma \ref{newp}. It also means that $W$ is a constant on $M$. Thus the proof can be completed by the same method employed  in Section \ref{sec:beta>1}.
\end{proof}

\begin{proof}[Proof of Theorem \ref{thmh}]
		In the hyperbolic space $\mathbb{H}^3$, $\lambda'(r)=\cosh r> 0$, under the assumption, it is easy to check $J_{1}\geq 0$ in \eqref{eq:J1} and the equality occurs if and only if $\kappa_{1}=\kappa_{2}$. Therefore  Lemma \ref{wmax} is established for this case.
Taking $n=2$, $\epsilon=-1$, $F=\sigma_{2}$ and $\mathcal{F}\geq 1$ into consideration, we know
	\begin{align*}
	&\quad \epsilon(\beta F^{2}\tr(b^{2})-F\tr b\sum_{i}\frac{\partial F}{\partial h_{ii}})+\mathcal{F}(\tr b\frac{\partial F}{\partial h_{ij}}h_{il}h_{jl}-n\beta F)\\
	&=-(2\sigma_{2}^{2}\tr(b^{2})-\sigma_{1}\sigma_{2}\tr b)+\mathcal{F}(\sigma_{1}\sigma_{2}\tr b-4 \sigma_{2})\\
	&=(\mathcal{F}-1)(\kappa_{1}-\kappa_{2})^{2}\geq 0
	\end{align*}
	and
	\begin{equation*}
	(\beta-1)\lambda'(F\tr b-\frac{n}{\beta}\sum_{i}\frac{\partial F}{\partial h_{ii}})=0.
	\end{equation*}
This leads to $L_1\geq 0$ in \eqref{eq:L1}.
	Thus using the same argument as in Section \ref{sec:beta>1}, we can easily carry out the proof of this theorem.
\end{proof}

%%%%%%%%%%%%%%%%%%%%%%%%%%%%%%%%%%%%

\end{document}